\documentclass[11pt,reqno]{amsart} 
\usepackage[height=190mm,width=130mm]{geometry}
\usepackage{amsmath}
\usepackage{amssymb,latexsym}
\usepackage{amsrefs}
\usepackage[draft=true]{hyperref}
\usepackage{cite}
\usepackage{a4wide}
\usepackage{amscd}
\usepackage{graphics}
\usepackage{color}
\usepackage[all,cmtip]{xy}
\usepackage{tikz-cd}

\theoremstyle{plain}

\newtheorem{lemma}{Lemma}
\newtheorem{corollary}{Corollary}
\newtheorem{proposition}{Proposition}
\newtheorem{example}{Example}

\theoremstyle{definition}
\newtheorem{defn}{Definition}

\theoremstyle{remark}
\newtheorem{remark}{Remark}

\newcommand{\Z}{\ensuremath{\mathbb{Z}}}   
\newcommand{\N}{\ensuremath{\mathbb{N}}}
\newcommand{\Q}{\ensuremath{\mathbb{Q}}}

\newcommand{\tspp}{\ensuremath{\mathfrak{Pr}}}
\newcommand{\tsi}{\ensuremath{\mathfrak{In}}}
\newcommand{\injd}{\ensuremath{\mathfrak{In ^{-1}}}}
\newcommand{\spp}{\ensuremath{\mathfrak{\underline{Pr} ^{-1}}}}
\newcommand{\si}{\ensuremath{\mathfrak{\underline{In}^{-1}}}}


\newcommand{\proj}{\ensuremath{{\mathcal{P}_0}}}
\newcommand{\inj}{\ensuremath{{\mathcal{I}_0}}}
\newcommand{\Hom}{\operatorname{Hom}}




\newcommand{\Mod}{\operatorname{Mod-}R}
\newcommand{\Mods}{\operatorname{Mod-}S}
\newcommand{\sip}{\ensuremath{\mathfrak{siP}}}
\newcommand{\sppp}{\ensuremath{\mathfrak{sP}}}


\numberwithin{equation}{section} 

\begin{document}

\title[Extension-reflecting modules]{Subinjective and Subprojective Extension-reflecting modules}

\author{ENG\.{I}N B\"uy\"uka\c{s}{\i}k}

\address{Izmir Institute of Technology \\ Department of Mathematics\\ 35430 \\ Urla, \.{I}zmir\\ Turkey}

\email{enginbuyukasik@iyte.edu.tr}

\subjclass[2010]{16D50, 16D60, 18G25}

\keywords{Subinjectivity domain; subprojectivity domain; QF-rings.}

\begin{abstract}

Given a right \( R \)-module \( M \) and any short exact sequence of right \( R \)-modules
\[
0 \to A \to B \to C \to 0,
\]
it is well known that if both \( A \) and \( C \) belong to the subinjectivity domain \( \si(M) \) (resp., the subprojectivity domain \( \spp(M) \)) of \( M \), then \( B \) also belongs to the corresponding domain. Module classes satisfying this closure property are said to be closed under extensions.

\noindent Let $ \tsi(M) = \{ N \in \Mod \mid M \in \si(N) \}\,\, \text{and}  \,\, \tspp(M) = \{ N \in \Mod \mid M \in \spp(N) \}.$
Unlike \( \si(M) \) and \( \spp(M) \), the classes \( \tsi(M) \) and \( \tspp(M) \) are not, in general, closed under extensions.

In this paper, we investigate certain classes of modules and rings that ensure the extension-closure of \( \tsi(M) \) and \( \tspp(M) \).
We prove that if  the injective hull of $M$ is projective, then $\tsi(M)$ is closed under extension; and  if $M$ is a homomorphic image of a module which is both projective and injective, then $\tspp(M)$ is closed under extensions. As a consequence, over a QF-ring, the classes 
$\tsi(M)$ and $\tspp(M)$ are closed under extensions for every module $M$.
We further explore several implications of these results and present examples of modules with the above properties over arbitrary rings.

Additionally, we introduce the rings whose right modules of finite length are homomorphic images of injective modules.  
Among other results, we prove that, if $\tsi(R)$ is closed under extensions, then modules of finite length are homomorphic image of injectives if and and only if  simple modules are homomorphic image of injectives.

\end{abstract}

\maketitle

\section{introduction}

Throughout this paper, $R$ will be an associative ring with identity and by a module we shall mean  a unital right $R$-module, unless otherwise stated.  The category of all right $R$-modules over a ring $R$ will be denoted by $\Mod$. For a module $M$, we write $N \leq M$ to indicate that $N$ is a submodule of $M$. The classes of injective and projective modules are denoted by $\inj$ and $\proj$, respectively.

Recall that, given two modules $M$ and $N$,  \( M \) is \( N \)-injective if for any submodule \( K \) of \( N \), any \( R \)-homomorphism \( f : K \to M \) extends to some member of $\Hom_R(N, M)$. The class of all right \( R \)-modules \( N \) such that \( M \) is injective relative to \( N \), denoted by \( \injd(M) \), is called the \emph{injectivity domain} of \( M \). The class of semisimple modules is contained in  $\injd(M)$ for any module module $M$. The modules whose injectivity domain consists only semisimple modules are said to be poor (see,  \cite{AAL}). Poor modules are the opposites of injectives in the sense that their injectivity domain is as small as possible.  For any module $N$, $\injd (N)$ is closed under taking submodules, finite direct sum and quotients (see, \cite{AF}). That is, $\injd (N)$ is a hereditary pretorsion class. For more detailed and comprehensive study of injectivity domains, we refer to  \cite{AF} and \cite{SL}.

Another alternative approach to the usual notions of injectivity and projectivity of modules has been investigated in \cite{AL} and \cite{HLM}, respectively. Since then, these topics have attracted considerable attention from researchers and have come to occupy an important place in module and ring theory. Many well-known module and ring structures have been characterized in terms of these new notions.

Following \cite{AL}, given two modules $A$ and $B$, $B$ is said to be $A$-subinjective if for every extension $C$ of $A$ and every homomorphism $\phi: A \to B$ there exists a homomorphism $\varphi : C \to B$ such that $\varphi |_A= \phi$. For a module $B$, the subinjectivity domain of $B$ is defined to be the collection of all modules $A$ such that $B$  is $A$-subinjective. Subprojectivity and subprojectivity domains are defined dually (see \cite{HLM}). Explicitly, the subinjectivity and subprojectivity domains of a module $B$, respectively,  are the classes: $\si (B)= \{ A \in \Mod \mid \text{$B$ is  $A$-subinjective}   \} \quad \text{and} \quad \spp (B)= \{ A \in \Mod \mid \text{$B$ is  $A$-subprojective}   \}.$
A right $R$-module $M$ is indigent (resp. p-indigent) if $\si(M)=\inj$ (resp. \spp(M)=\proj).
A ring $R$ is said to have no right subinjective (resp. subprojective) middle  class if every module is indigent (resp. p-indigent) or injective (resp. projective). 

Indigent and p-indigent modules, along with some of their generalizations, continue to be an active area of research. Several open problems remain, such as the existence of indigent modules over an arbitrary ring and the complete characterization of rings that lack a subinjective (or subprojective) middle class. In recent years, generalizations of these concepts to different categories and classes of modules have also attracted significant attention, see for example \cite{AABGO, AD, D1, D2, D3}.

For a class of modules $\mathcal{A}$, the class  $\sip(\mathcal{A})=\{\si (M) \mid M \in \mathcal{A}   \}$ is said to be the subinjective profile of $\mathcal{A}.$ Suprojectivity profile of  $\mathcal{A}$ is defined similarly, and denoted as $\sppp(\mathcal{A})$.  The classes of simple modules, cyclic modules,  modules of finite length and finitely generated modules will be denoted by $\mathcal{S},\, \mathcal{C}$, \,$\mathcal{FL}$  and $\mathcal{FG}$, respectively.
Subinjective profiles of $\mathcal{A}$  extensively studied in the literature, especially for the classes $\mathcal{S}$ and $\Mod$  (see,  \cite{ABE,  AL, DO, Durgun, ADD, SA, SL}). Recently, $\sppp(\mathcal{FG})$  and $\sip(\mathcal{FG})$  are studied in \cite{DKS} and \cite{DO}, respectively. 



In the study of rings and modules related to subinjectivity and subprojectivity domains, it is important to understand certain intrinsic properties of these domains. For example, both subinjectivity and subprojectivity domains are closed under finite direct sums and extensions (Lemma \ref{lem:subinjectivityclosedunderextension}). Moreover, $R$ is right hereditary if and only if  $\si(M)$ is closed under homomorphic images for each $M \in \Mod$  (\cite[Theorem 2.1]{ABD}) if and only if   $\spp(M)$ is closed submodules for each $M \in \Mod$ (\cite[Proposition 2.15]{HLM}).
As can be seen easily, $\si(M)$ is closed under submodules if and only if $M$ is injective; and hence $R$ is semisimple artinian if and only if $\si(N)$ closed under submodules for each $N \in \Mod.$ 

As demonstrated, subinjectivity domains possess some of the closure properties enjoyed by injectivity domains, but only when certain conditions on the module or the underlying ring are satisfied. This distinction motivates a deeper investigation into the structural conditions under which subinjectivity and subprojectivity domains enjoy additional certain closure properties.

To contribute to the studies in this direction, in this paper, we introduce and investigate the following notions. For a module $M$, let $\tsi (M)= \{ N \in \Mod \mid M \in \si(N)  \}$ and  $\tspp (M)= \{ N \in \Mod \mid M \in \spp(N)\}.$ It is clear that, $M$ is injective (resp. projective) if and only if $\tsi(M)=\{ \Mod \}$ (resp. $\tspp(M)=\{ \Mod \}$). 

Given a right $R$-module $M$ and any short exact sequence $0\to A \to B \to C \to 0$ of right $R$-modules, $M$ is said to be {\it subinjective extension-reflecting (si.e.r)}  in case $M \in \si(A)\cap \si(C)$ implies $M \in \si(B)$.  $M$ is {\it subprojective extension-reflecting (sp.e.r)} if $M \in \spp(A)\cap \spp(C)$ implies $M \in \spp(B)$. Equivalently that is to say, $M$ is si.e.r. (resp. sp.e.r.) if and only if $\tsi(M)$ (resp. $\tspp(M))$ is closed under extensions.

It is worth mentioning that, depending on the context they appear, the equivalent expressions "N is si.e.r (sp.e.r)" and "$\tsi(N)\, (\tspp(N))$ is closed under extensions" will be used interchangeably throughout the paper.

Injective and projective modules are trivial examples of si.e.r and sp.e.r modules, respectively. There are modules that are not si.e.r or sp.e.r (see Examples   \ref{example} and  \ref{examplenotsp}).

The paper is organized as follows.

In section 2, we first give some examples in order to show that, for a given module $N,$ $\tsi (N)$ and $\tspp (N)$ are not closed under extensions in general. We provide  sufficient conditions ensuring that $\tsi (M)$ and $\tspp (M)$  are closed under extensions. Namely, we prove that, (1) if $N$ is a submodule of a module that is both injective and projective, then $\tsi(N)$ is closed under extensions, and (2)  if $N$ is a homomorphic image of a module that is both injective and projective, then $\tspp(N)$ is closed under extensions. We conclude that, over a $QF$-ring every right (or left) module $N$, both $\tsi(N)$ and $\tspp (N)$ are closed under extensions. We also show that, over a ring $R$ without subinjective (subprojective) middle class, which is not necessarily $QF,$ $\tsi(N)$ (\tspp(N)) is closed under extensions for each $N \in \Mod$.

The class of si.e.r. modules is closed under taking submodules, whereas the class of sp.e.r. modules is closed under taking factor modules.
 For an si.e.r module $M$, we prove that $(1)$ $M \in \si(S) $ for each  $S \in \mathcal{S}$ if and only if $M \in \si (A)$ for each   $A \in \mathcal{FL}$; and $(2)$ $M \in \si(C) $ for each  $C \in \mathcal{C}$ if and only if $M \in \si (D)$ for each  $D \in \mathcal{FG}$. Dual of these results are also obtained for sp.e.r modules, as well. 

For a $QF$-ring, we prove that for each $\mathcal{A},\, \mathcal{B} \in \{\mathcal{S},\, \mathcal{C},\,\mathcal{FL},\,\mathcal{FG} \}$
$$\inj=\bigcap_{A \in \mathcal{A}}\si(A)= \bigcap_{B \in \mathcal{B}}\spp(B)=\proj.$$ 

Since $\inj=\proj$ over a QF-ring, indigent and p-indigent modules coincide. We obtain that, for a QF-ring, a module $B$ is indigent if and only if $\si(A) \cap \si (B/A)=\inj$ for each $A \leq B.$

As a byproduct of these results, for several well-known module classes, such as, $FG$-injective, $C$-injective,  $FG$-projective and $C$-projective we obtain that: (a) If $M$ is si.e.r., then $M$ is $FG$-injective if and only if  it is $C$-injective. (b) If $M$ is sp.e.r., then $M$ is $FG$-projective if and only if it is $C$-projective. These results leads to: over a QF-ring, all the notions $FG$-injective, $C$-injective,  $FG$-projective and $C$-projective coincide with  $\inj$ (=$\proj$).

It is well known that a ring \(R\) is QF if and only if every right \(R\)-module is a homomorphic image of an injective module. In~\cite{meric}, the authors study rings whose proper cyclic modules are homomorphic images of injective modules. More recently, rings whose simple right modules are homomorphic images of injective modules have been investigated in~\cite{BL}; such rings are referred to as right dual Kasch rings.

In section 3, we investigate rings whose modules of finite (composition) length are homomorphic images of injective modules. Such rings are said to satisfy property~\((Q)\). 
We prove that a ring \(R\) satisfies \((Q)\) if and only if \(R \in \si(N)\) for each \(N \in \mathcal{FL}\). We show that right artinian rings satisfying \((Q)\) are QF. An example is provided to demonstrate that the class of dual Kasch rings properly contains the class of rings satisfying \((Q)\). When \(\tsi(R)\) is closed under extensions, we prove that \(R\) satisfies \((Q)\) if and only if \(R\) is dual Kasch. It is also shown that factor rings of rings satisfying \((Q)\) need not themselves satisfy \((Q)\). For a commutative noetherian ring \(R\), we prove that every factor ring of \(R\) satisfies \((Q)\) if and only if every factor ring of \(R\) is QF.

\section{Extension-reflecting modules}


The following lemma is well known. From which we see that, subinjectivity domain of any right module are closed under extension. We include the proof for completeness.

\begin{lemma}\label{lem:subinjectivityclosedunderextension} For every module $N$,   $\si(N)$ and $\spp(N)$ are closed under extensions. \end{lemma}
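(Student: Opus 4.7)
The plan is to prove both closure statements by direct diagrammatic arguments that reduce the extension (resp.\ lifting) problem for the middle term $B$ to the corresponding problems for $A$ and $C$ separately. The two arguments have essentially the same flavor as the two-step construction underlying the Horseshoe Lemma.

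For $\si(N)$: assume $A,C\in\si(N)$, fix an embedding $B\hookrightarrow D$ and a homomorphism $\phi:B\to N$, and aim to extend $\phi$ to $D$. Viewing $A$ as a submodule of $D$ via $A\hookrightarrow B\hookrightarrow D$ and using $A\in\si(N)$, first extend $\phi|_A$ to some $\psi:D\to N$. The map $\phi-\psi|_B:B\to N$ then vanishes on $A$ and factors through a map $\bar\phi:C\cong B/A\to N$. Since $D/A$ is an extension of $B/A\cong C$ and $C\in\si(N)$, $\bar\phi$ extends to some $\tilde\phi:D/A\to N$. The combination $\Phi:=\psi+\tilde\phi\circ\pi$, where $\pi:D\to D/A$ is the canonical projection, then restricts to $\phi$ on $B$, as desired.

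For $\spp(N)$: dually, assume $A,C\in\spp(N)$, fix an epimorphism $g:D\to B$ and a map $f:N\to B$, and aim to lift $f$ through $g$. Let $p:B\to C$ be the projection. Since $p\circ g:D\to C$ is an epimorphism and $C\in\spp(N)$, the composite $p\circ f$ lifts to some $h:N\to D$ with $pgh=pf$; hence $f-gh$ takes values in $A$, producing $\phi:N\to A$. The restriction of $g$ to $g^{-1}(A)$ is an epimorphism onto $A$, and since $A\in\spp(N)$ the map $\phi$ lifts to some $\psi:N\to g^{-1}(A)$. Then $h+\iota\psi:N\to D$, with $\iota:g^{-1}(A)\hookrightarrow D$ the inclusion, is the required lift of $f$.

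No step in either argument poses a real obstacle: subinjectivity and subprojectivity are designed precisely so that arbitrary extensions and epimorphisms carry enough information to support these two-step reductions. The only routine verifications left are the final identities $\Phi|_B=\phi$ and $g(h+\iota\psi)=f$, which follow immediately from the constructions and the defining properties of $\psi$ (resp.\ $h$).
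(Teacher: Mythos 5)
Your proposal is correct, and both halves check out: in the subinjective case the key identity $\Phi|_B=\psi|_B+\bar\phi(\,\cdot+A)=\psi|_B+(\phi-\psi|_B)=\phi$ holds, and in the subprojective case $g(h+\iota\psi)=gh+\phi=f$ holds, with the needed surjectivity of $g^{-1}(A)\to A$ following from the surjectivity of $g$. However, your route differs from the paper's. For $\si(N)$ the paper does not argue elementwise: it forms the pushout of an arbitrary $f:B\to E$ along the canonical epimorphism $\pi:B\to B/A$, obtains a $3\times 3$ commutative diagram of short exact sequences, applies $\Hom(-,N)$, and concludes that the restriction map $\Hom(E,N)\to\Hom(B,N)$ is surjective by the five lemma, using that $A,B/A\in\si(N)$ make the relevant restriction maps surjective. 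Your argument instead performs the classical two-step correction (extend on $A$, then fix the defect through $B/A\hookrightarrow D/A$), which is more elementary, avoids the pushout and the five lemma entirely, and makes the construction of the extending map explicit; the paper's version is more homological and generalizes more readily to settings where one argues with exact functors rather than elements. For $\spp(N)$ the paper simply cites an external reference, so your dual lifting argument supplies a self-contained proof that the paper omits. Both approaches are sound; yours trades conceptual packaging for transparency.
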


\begin{proof}  First we prove that $\si(N)$ is closed under extensions. For a module $B$ and a submodule $A$ of $B$, we shall show that if $A$ and $B/A$ are contained in $\si(N),$ then $B \in \si(N).$  Let $f:B \to E$ be a homomorphism. Consider the following pushout diagram  of $f$ and $\pi,$ where $\pi :B \to B/A$ is the canonical epimorphism.

\[
\xymatrix{
& 0 \ar[d] & 0 \ar[d] & 0 \ar[d] & \\
0 \ar[r] & A \ar[r]^{\theta} \ar@{=}[d] & B \ar[r] \ar[d]^{f} & B/A \ar[r] \ar[d]^{g} & 0 \\
0 \ar[r] & A \ar[r]^{\alpha} & E \ar[r] \ar[d] & Y \ar[r] \ar[d] & 0 \\
& & Z \ar[r]^{=} \ar[d] & Z \ar[d] & \\
& & 0 & 0 &
}
\]

Applying $\Hom (-, N)$ to the diagram above we obtain the following commutative diagram.

\[
\xymatrix{
& 0 \ar[d] & 0 \ar[d] & \\
0 \ar[r] & \mathrm{Hom}(Z, N) \ar[r]^{\cong} \ar[d] & \mathrm{Hom}(Z, N) \ar[d] & \\
0 \ar[r] & \mathrm{Hom}(Y, N) \ar[r] \ar[d]^{g^*} & \mathrm{Hom}(E, N) \ar[r]^{\alpha ^*} \ar[d]^{f^*} & \mathrm{Hom}(A, N) \ar[r] \ar[d]^{\cong} & 0 \\
0 \ar[r] & \mathrm{Hom}(B/A, N) \ar[r]^{\pi^*} & \mathrm{Hom}(B, N) \ar[r]^{\theta^*} & \mathrm{Hom}(A, N) \ar[r] & 0
}
\]
Since $A \in  \si (N)$, the maps $\alpha ^*$ and $\theta ^*$ are epimorphisms. Thus, the second and the third rows are exact. On the other hand, as $B/ A \in \si (N),$ the map $g^*$ is an epimorphism, as well.  Hence $f^*$ is an epimorphism by the 5-Lemma, and so $B \in \si (N).$ Hence $\si(N)$ is close under extensions.

The proof for $\spp(N)$ is dual and can be seen  in \cite[Proposition 1]{AB}. 
\end{proof}


For a module $M$, let $\tsi (M)= \{ N \in \Mod \mid M \in \si(N)  \}$ and  $\tspp (M)= \{ N \in \Mod \mid M \in \spp(N)\}.$ It is clear that, $M$ is injective if and only if $\tsi(M)=\{ \Mod \}$.

\begin{defn} Let $M$ be a module and $\mathbb{E}:\, 0\to A \to B \to C \to 0$ be a short exact sequence in $\Mod$. We say that: 
\begin{enumerate}
\item $M$ is {\it  subinjective extension-reflecting (si.e.r) } if  $\tsi (M)$ is closed under extension. That is, the condition $M \in \si(A) \cap \si(C)$ implies $M \in \si (B)$ for every short exact sequence $\mathbb{E}.$
\bigskip
\item $M$ is  {\it  subprojective extension-reflecting (sp.e.r) }   if $\tspp (M)$  is closed under extension. That is, the condition $M \in \spp(A) \cap \spp(C)$ implies $M \in \spp(B)$ for every short exact sequence $\mathbb{E}.$

\end{enumerate}
\end{defn}

\begin{defn} $R$ is said to be right fully si.e.r. (resp. sp.e.r.) if every right $R$-module is si.e.r. (resp. sp.e.r.). Left fully si.e.r and left fully sp.e.r. rings are defined similarly.
\end{defn}

We begin by presenting examples to show  there exist modules which are not si.e.r or sp.e.r.

\begin{example}\label{example} Consider the ring $$R=\{\left(\begin{array}{cc}a & (b,c) \\0 & a\end{array}\right) \mid a,b,c \in \Z_2\}.$$Then $R$ is a commutative (finite) artinian local ring with $$J(R)= soc(R)=\left(\begin{array}{cc}0 & \Z_2 \times \Z_ 2 \\0 & 0\end{array}\right).$$ 
By \cite[Theorem 3.4]{BL} $R$ is dual Kasch. Thus $R \in \si(R/J(R))$ by \cite[Theorem 2.3]{BL}. Then  $R\in \si(J(R))$, because $J(R)$  is  semisimple. Hence $R \in \si(J(R)) \cap \si(R/J(R)).$   On the other hand, since $R$ is local and $soc(R)$ is not simple, $R$ is not a QF ring by \cite[Theorem 15.27]{Lammodules}, and so   $R$ is not self-injective. Thus $R \notin \si(R).$ Hence $R$ is not si.e.r.
\end{example}

\begin{example}\label{examplenotsp} In the category of $\Z$-modules, $\Q$ is not sp.e.r. Consider the exact sequnce $0\to \Z \to \Q \to \Q / \Z \to 0.$ Clearly, $\Z$ is $\Q$-subprojective. Also  $Hom (\Q / \Z,\, \Q)=0$ implies $\Q / \Z$ is $\Q$-subprojective. There is an epimorphism $\Z^{(\N)} \to \Q \to 0.$ Since $Hom (\Q, \Z^{(\N)})=0,$ the identity map $1_{\Q} :\Q \to \Q$ can not be lifted to a homomorphism $\Q \to \Z^{(\N)}.$ Thus $\Q$ is not $\Q$-subprojective. Summing up, we have $\Q \in \spp(\Z) \cap \spp(\Q /\Z)$ and $\Q \notin \spp(\Q).$ Hence $\Q$ is not sp.e.r.
\end{example}

\begin{example}\label{E1}\noindent
\begin{enumerate}
\item Every injective module is si.e.r., since $\inj \subseteq \si (N)$ for every  $N \in \Mod.$

\bigskip

\item If $M$ is an si.e.r module, then $M\oplus E$ is si.e.r for every injective module $E:$ For every  module $N$, $E \in \si(N)$ by $(1).$  Hence $M\oplus E\in \si(N)$ if and only if $M \in \si(N).$

\bigskip

\item Any right hereditary right noetherian ring $R$ is si.e.r: By \cite[Theorem 19]{ABE}, for such a ring we have $R\in \si (N)$ if and only if $N$ is injective. Since injective modules are closed under extension, the claim follows.

\item As a dual of $(1)$ and $(2)$, every projective module is sp.e.r., and if $M$ is an sp.e.r. module, then $M\oplus P$ is sp.e.r for every projective module $P.$

\end{enumerate}
\end{example}

\begin{proposition} The classes of si.e.r and sp.e.r modules are closed under finite direct sums.
\end{proposition}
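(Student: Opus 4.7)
The plan is to reduce the proposition to two observations: (i) the operator $\tsi$ sends finite direct sums to intersections (and similarly for $\tspp$), and (ii) the intersection of two classes closed under extensions is itself closed under extensions. Once both are in hand, si.e.r.\ and sp.e.r.\ are preserved by the direct sum of two summands, and the general finite case follows by induction on the number of summands.

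For observation~(i), I fix modules $M_1$, $M_2$, and $N$, and argue straight from the defining lifting/extension property. A homomorphism $\phi:N\to M_1\oplus M_2$ is precisely a pair $(\pi_1\phi,\pi_2\phi)$, and for any overmodule $C$ of $N$, an extension $\varphi:C\to M_1\oplus M_2$ of $\phi$ exists if and only if each component $\pi_i\phi:N\to M_i$ extends to $C$ (forward: project; backward: assemble componentwise). Hence $M_1\oplus M_2$ is $N$-subinjective iff both $M_1$ and $M_2$ are $N$-subinjective, which is precisely
\[
\tsi(M_1\oplus M_2)\;=\;\tsi(M_1)\cap\tsi(M_2).
\]
The sp.e.r.\ analogue is the dual statement with the same componentwise proof: for any epimorphism $g:C\to N$, a map $\psi:M_1\oplus M_2\to N$ lifts through $g$ iff each restriction $\psi|_{M_i}:M_i\to N$ lifts through $g$, yielding $\tspp(M_1\oplus M_2)=\tspp(M_1)\cap\tspp(M_2)$.

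Observation~(ii) is immediate: if $\mathcal{X},\mathcal{Y}$ are closed under extensions and $0\to A\to B\to C\to 0$ is exact with $A,C\in\mathcal{X}\cap\mathcal{Y}$, then $B$ lies in each of $\mathcal{X}$ and $\mathcal{Y}$. Applying this to $\mathcal{X}=\tsi(M_1)$, $\mathcal{Y}=\tsi(M_2)$ (respectively $\tspp(M_1)$, $\tspp(M_2)$) and invoking~(i) shows that $\tsi(M_1\oplus M_2)$ (respectively $\tspp(M_1\oplus M_2)$) is closed under extensions whenever each $\tsi(M_i)$ (respectively $\tspp(M_i)$) is. This is exactly the si.e.r.\ (resp.\ sp.e.r.) closure of the direct sum.

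There is no serious obstacle in the argument; the only nontrivial point is noticing that $\tsi$ and $\tspp$ convert $\oplus$ into $\cap$, after which the proposition becomes a one-line bookkeeping step. If one wanted a more conceptual phrasing, the same identities can be read off from the fact that being $N$-subinjective (resp.\ being $N$-subprojective along all epimorphisms onto $N$) is equivalent to factoring the relevant $\Hom$ through an injective (resp.\ projective) module, and the latter property is manifestly closed under forming direct sums of the target (resp.\ source).
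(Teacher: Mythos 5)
Your overall strategy coincides with the paper's: both reduce the proposition to the identity $\tsi(M_1\oplus M_2)=\tsi(M_1)\cap\tsi(M_2)$ (and its $\tspp$ analogue), after which extension-closure of the intersection is immediate. The identity you display is correct and is exactly the fact the paper cites from \cite[Proposition 2.5]{AL} and \cite[Proposition 2.12]{HLM}. However, the componentwise argument you offer for it proves a different statement, because the variance is backwards. In the paper's conventions, $N\in\tsi(M_1\oplus M_2)$ means $M_1\oplus M_2\in\si(N)$, i.e.\ every homomorphism $M_1\oplus M_2\to N$ extends over every overmodule of $M_1\oplus M_2$. Your argument instead concerns homomorphisms $N\to M_1\oplus M_2$ and overmodules of $N$; what it establishes is $\si(M_1\oplus M_2)=\si(M_1)\cap\si(M_2)$, which is true but is not the identity the proposition needs (si.e.r.\ is a property of $\tsi$, not of $\si$). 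The same swap occurs in the subprojective half: lifting $\psi\colon M_1\oplus M_2\to N$ through epimorphisms onto $N$ computes $\spp(M_1\oplus M_2)$, whereas $\tspp(M_1\oplus M_2)$ is about lifting maps $N\to M_1\oplus M_2$ through epimorphisms onto $M_1\oplus M_2$. The reason ``project and assemble'' does not transfer is that overmodules of, and epimorphisms onto, $M_1\oplus M_2$ do not decompose componentwise.

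The gap is repairable without changing your architecture. The identity you need says that $\si(N)$ and $\spp(N)$ are closed under finite direct sums and direct summands; besides citing \cite{AL} and \cite{HLM} as the paper does, you can obtain the inclusion $\tsi(M_1)\cap\tsi(M_2)\subseteq\tsi(M_1\oplus M_2)$ from Lemma \ref{lem:subinjectivityclosedunderextension} (the direct sum is a split extension), and the reverse inclusion by extending a map $M_1\to N$ over $C\oplus M_2\supseteq M_1\oplus M_2$ and restricting to $C$. Alternatively, your closing remark, once the source/target roles are also straightened out, is a genuine proof: $M_1\oplus M_2\in\si(N)$ iff every homomorphism $M_1\oplus M_2\to N$ factors through an injective module, and this holds iff each restriction to $M_i$ does, since a finite direct sum of injectives is injective (dually for $\spp$ with projectives). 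With the key identity justified correctly, the remainder of your proof is fine and matches the paper's.
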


\begin{proof}

Let $M=M_1 \oplus \cdots \oplus M_n$ be a direct sum of modules. For a module $N,$ $M \in \si(N)\,  (\spp(N))$  if and only if $M_i \in  \si(N) \, (\spp(N))$ for $i=1,\cdots,n$ (see \cite[Proposition 2.5]{AL} and \cite[Proposition 2.12]{HLM}) Thus, $M$ is si.e.r. (resp. sp.e.r.) if and only if $M_i$ is si.e.r. (resp. sp.e.r.) for each $i=1,\cdots,n.$ 
\end{proof}

Following \cite{ABE}, a module $M$ is said to be a test module for injectivity by subinjectivity or t.i.b.s. module if $M \in \si(N)$ for some module $N$, then $N$ is injective.

\begin{proposition}\label{prop:tibsser} Every t.i.b.s. module is si.e.r.
\end{proposition}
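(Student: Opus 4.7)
The plan is to observe that for a t.i.b.s.\ module $M$, the class $\tsi(M)$ coincides exactly with $\inj$, and then to use the (well-known) fact that $\inj$ itself is closed under extensions.

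First I would check the inclusion $\inj\subseteq\tsi(M)$, which holds for every module $M$ without any hypothesis. Indeed, if $E$ is injective and $E\leq C$ is any extension, then $E$ is a direct summand of $C$, so the projection $\pi\colon C\to E$ exists; given any $\phi\colon E\to M$, the composite $\phi\circ\pi\colon C\to M$ extends $\phi$. This shows $M\in\si(E)$ for every injective $E$, i.e.\ $E\in\tsi(M)$.

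Next I would use the t.i.b.s.\ hypothesis in the form of its defining implication: $M\in\si(N)$ forces $N$ to be injective. This is exactly the reverse inclusion $\tsi(M)\subseteq\inj$. Combining the two inclusions yields $\tsi(M)=\inj$.

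Finally, I would note that $\inj$ is closed under extensions: in any short exact sequence $0\to A\to B\to C\to 0$ with $A$ and $C$ injective, the injectivity of $A$ splits the sequence, so $B\cong A\oplus C$ is injective. Thus $\tsi(M)=\inj$ is closed under extensions, which is precisely the statement that $M$ is si.e.r. There is really no significant obstacle here; the only point worth flagging is that the inclusion $\inj\subseteq\tsi(M)$ is automatic (holding for all modules $M$) and does not require $M$ to be t.i.b.s.
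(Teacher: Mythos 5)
Your argument is correct in outline and is essentially the paper's proof repackaged: the paper likewise uses the t.i.b.s.\ hypothesis to conclude that the two end terms $A$ and $C$ are injective, splits the sequence to get $B\cong A\oplus C$ injective, and then invokes the fact that every injective module lies in $\tsi(M)$; your formulation $\tsi(M)=\inj$ just makes that last ingredient explicit. One detail needs fixing, though: your verification of the inclusion $\inj\subseteq\tsi(M)$ proves the wrong relation. With the paper's conventions, $E\in\tsi(M)$ means $M\in\si(E)$, i.e.\ every homomorphism $M\to E$ extends over any overmodule $C\supseteq M$ --- which is immediate simply because $E$ is injective. What your splitting-and-projection argument actually establishes (extending maps $E\to M$ over overmodules of $E$) is the statement $E\in\si(M)$, which is a different (also true) fact and not the inclusion you need here.
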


\begin{proof} Suppose $N$ is a t.i.b.s. module. Assume that, $N \in \si(A) \cap \si(B/A)$ for some module $B$ and $A\leq B.$ Then $A$ and $B/A$ are injective, because $N$ is a t.i.b.s. module. Then $B\simeq A \oplus B/A$ is injective, and so $N\in \si(B).$ Therefore $N$ is si.e.r.
\end{proof}



Every ring has a t.i.b.s. module, which  must be noninjective, unless the ring is semisimple (see, \cite[proposition 1]{ABE}). This implies the following.

\begin{corollary} Every si.e.r module is injective if and only if $R$ is semisimple artinian.
\end{corollary}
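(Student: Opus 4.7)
The plan is to prove the two directions separately, with essentially all the work already done by the preceding proposition and the cited remark. For the ``if'' direction, I would simply observe that when $R$ is semisimple artinian every right $R$-module is semisimple, hence injective; so the implication ``si.e.r.\ $\Rightarrow$ injective'' holds vacuously because every module in $\Mod$ is already injective.

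For the ``only if'' direction, the strategy is to exhibit a single si.e.r.\ module whose injectivity is equivalent to $R$ being semisimple artinian. The two ingredients the excerpt has just placed at my disposal are exactly what is needed: Proposition~\ref{prop:tibsser}, which asserts that every t.i.b.s.\ module is si.e.r., and the remark immediately following it (drawing on \cite[Proposition~1]{ABE}), which asserts that every ring admits a t.i.b.s.\ module, and that such a module must be noninjective unless $R$ is semisimple artinian. Assuming every si.e.r.\ module is injective, I would pick a t.i.b.s.\ module $M$ guaranteed by the remark; by Proposition~\ref{prop:tibsser} this $M$ is si.e.r., so by hypothesis $M$ is injective; but then the second half of the remark forces $R$ to be semisimple artinian, otherwise $M$ would have been noninjective, a contradiction.

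The main ``obstacle'' is really only bookkeeping rather than mathematics: one has to make sure the quantifiers in \cite[Proposition~1]{ABE} are applied in the correct order, namely that the \emph{existence} of a t.i.b.s.\ module is unconditional while its \emph{noninjectivity} is conditional on $R$ not being semisimple artinian. Once that contrapositive is set up cleanly, the proof reduces to a two-line application of Proposition~\ref{prop:tibsser} and the cited fact, and no further construction, diagram chase, or ring-theoretic argument is required.
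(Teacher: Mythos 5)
Your proposal is correct and matches the paper's (implicit) argument exactly: the ``if'' direction is trivial since every module over a semisimple artinian ring is injective, and the ``only if'' direction combines Proposition~\ref{prop:tibsser} with the cited fact from \cite[Proposition~1]{ABE} that every ring has a t.i.b.s.\ module which is noninjective unless $R$ is semisimple. No gaps.
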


A module $N$ is said to be indigent if $\si(N)=\inj.$ A ring $R$ is said to have no right subinjective middle class if every module is either injective or indigent.  $M$ is p-indigent if $\spp(M)=\proj.$ $R$ is a ring with no right subprojective middle class if every right $R$-module is p-indigent or projective.

\begin{proposition} Let $R$ be ring with no right subinjective middle class. Then $R$ is right fully si.e.r.
\end{proposition}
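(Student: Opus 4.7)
The approach will be to exploit the dichotomy imposed by the hypothesis: since $R$ has no right subinjective middle class, every right $R$-module is either injective or indigent. Given an arbitrary $M \in \Mod$, I plan to split into cases and show in each that $\tsi(M)$ is closed under extensions.

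If $M$ is injective, Example \ref{E1}(1) already settles the matter, so the interesting case is when $M$ is indigent (and, without loss of generality, not injective). The key step is to identify $\tsi(M)$ explicitly. Recalling that $\tsi(M) = \{N \mid M \in \si(N)\}$, I first note that every injective $N$ satisfies $\si(N) = \Mod$, since any homomorphism into an injective extends to any supermodule; hence $\inj \subseteq \tsi(M)$ automatically. For the reverse inclusion I take an arbitrary $N \in \tsi(M)$ and use the hypothesis: $N$ is either injective or indigent. If $N$ were indigent, then $\si(N) = \inj$, and the assumption $M \in \si(N)$ would force $M$ to be injective, contrary to the current case. Therefore $N$ must be injective, and we conclude $\tsi(M) = \inj$.

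The final step is the standard observation that $\inj$ is closed under extensions: in any short exact sequence $0 \to A \to B \to C \to 0$ with $A$ injective, the sequence splits, so $B \cong A \oplus C$; if $C$ is also injective, then $B$, being a finite direct sum of injectives, is itself injective. Hence $\tsi(M) = \inj$ is closed under extensions, so $M$ is si.e.r, and since $M$ was arbitrary, $R$ is right fully si.e.r. I do not anticipate a genuine obstacle: the entire argument rests on the observation that for an indigent non-injective $M$ the class $\tsi(M)$ collapses to $\inj$, after which extension-closure of the injectives finishes the proof instantly.
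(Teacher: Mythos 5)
Your proof is correct and takes essentially the same approach as the paper's: where the paper cites the fact that over such a ring every module is t.i.b.s.\ or injective and then applies Proposition~\ref{prop:tibsser}, you derive the same dichotomy directly from the indigent/injective split, observing that for an indigent non-injective $M$ one has $\tsi(M)=\inj$, which is closed under extensions. The substance of the two arguments is identical, with your version merely inlining the cited facts.
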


\begin{proof} Suppose $R$ has no right subinjective middle class. Then every module is t.i.b.s. or injective by \cite[Proposition 2]{ABE}.  Let $M$ be a module. If $M$ is a t.i.b.s module then the claim follows by Proposition \ref{prop:tibsser}. If $M$ is injective, then $M$ is si.e.r by Example \ref{E1}(1). 
\end{proof}

\begin{proposition}\label{prop:} If $R$ has no right subprojective middle class, then $R$  is right fully sp.e.r.
\end{proposition}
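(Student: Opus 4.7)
The plan is to imitate the proof of the preceding proposition by passing to the dual concepts on the subprojective side. Introduce the dual of a t.i.b.s.\ module: call a module $M$ a \emph{t.p.b.s.\ module} (test module for projectivity by subprojectivity) if $M \in \spp(N)$ implies $N$ is projective. Since projective modules always lie in $\tspp(M)$, being t.p.b.s.\ amounts to $\tspp(M) = \proj$.

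First, I would establish the dual of Proposition~\ref{prop:tibsser}: every t.p.b.s.\ module $M$ is sp.e.r. Given any short exact sequence $0 \to A \to B \to C \to 0$ with $M \in \spp(A) \cap \spp(C)$, the t.p.b.s.\ hypothesis forces both $A$ and $C$ to be projective. Since $C$ is projective the sequence splits, so $B \cong A \oplus C$ is projective, and hence $M \in \spp(B)$, showing that $\tspp(M)$ is closed under extensions.

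Next, I would invoke the subprojective analogue of \cite[Proposition 2]{ABE}: if $R$ has no right subprojective middle class, then every right $R$-module is either projective or t.p.b.s. Combining this with the step above and with Example~\ref{E1}(4), which records that every projective module is sp.e.r., yields the conclusion. Indeed, for any module $M$ over such an $R$, either $M$ is projective (hence sp.e.r.\ by Example~\ref{E1}(4)) or $M$ is t.p.b.s.\ (hence sp.e.r.\ by the previous step), so every module is sp.e.r., i.e., $R$ is right fully sp.e.r.

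The main obstacle is the subprojective analogue of \cite[Proposition 2]{ABE} used in the second step. If this dual is not explicitly recorded in \cite{HLM} or a subsequent reference on subprojectivity, one would have to produce it by formally dualizing the construction from the subinjective case; this is essentially mechanical but is the only non-routine verification in the plan. Everything else is a direct translation of the proof already written for the si.e.r.\ analogue.
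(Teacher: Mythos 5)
Your proof is correct, but it is organized differently from the paper's. The paper argues directly on the terms of the short exact sequence: since $R$ has no right subprojective middle class, each of $A$ and $B/A$ is p-indigent or projective; if either is p-indigent then $M\in\spp(A)\cap\spp(B/A)\subseteq\proj$ forces $M$ to be projective, hence $M\in\spp(B)$, while if both are projective the sequence splits and $B$ is projective, so again $M\in\spp(B)$. You instead case-split on $M$ itself, dualizing the t.i.b.s.\ route that the paper uses for the subinjective analogue (Proposition \ref{prop:tibsser} together with \cite[Proposition 2]{ABE}): every module is projective or ``t.p.b.s.'', and each kind is sp.e.r. The one step you flag as an obstacle --- the subprojective analogue of \cite[Proposition 2]{ABE} --- is in fact immediate and you could have closed it on the spot: if $M$ is not projective and $M\in\spp(N)$ for some non-projective $N$, then $N$ is p-indigent, so $\spp(N)=\proj$ and $M$ would be projective, a contradiction; hence $\tspp(M)=\proj$. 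Both arguments rest on the same three facts (p-indigence of $X$ plus $M\in\spp(X)$ forces $M$ projective; projective modules lie in, and have full, subprojectivity domains; extensions of projectives split). Your version buys a reusable intermediate statement (t.p.b.s.\ implies sp.e.r., the exact dual of Proposition \ref{prop:tibsser}, which the paper omits), at the cost of introducing a notion the paper does not define; the paper's version is more self-contained and marginally shorter.
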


\begin{proof}  Suppose \( M \in \spp(A) \cap \spp(B/A) \). By hypothesis, \( A \) is either p-indigent or projective. In the first case, \( A \) is p-indigent, which implies that \( M \) is projective and hence \( M \in \spp(B) \). In the second case, \( A \) is projective. If \( B/A \) is p-indigent, then \( M \) is again projective, so \( M \in \spp(B) \). If \( B/A \) is projective, then \( B = A \oplus B/A \) is projective, which again implies \( M \in \spp(B) \).
\end{proof}

\begin{example}\label{example:nomc} The following rings have no right subinjective and  right subprojective middle classes, as shown in \cite[Examples 11–12]{ABE} and \cite[Theorem 3.1]{Durgun}, respectively. Hence,  these rings are both fully si.e.r and fully sp.e.r.

\begin{enumerate}
\bigskip
\item $R=\left(\begin{array}{cc}D & D \\0 & D\end{array}\right)$, $D$ is a division ring. 

\bigskip

\item $R$ is an artinian chain ring.
\end{enumerate}
\end{example}


As illustrated by the Examples \ref{example} and \ref{examplenotsp}, for a given module $M$ neither $\tsi (M)$ nor $\tspp (M)$ is closed under extensions in general. The following two successive results provide sufficient conditions ensuring that $\tsi (M)$ and $\tspp (M)$  are closed under extensions.

\begin{proposition}\label{lem:injdm}  If  $M$ is a submodule of a module $E$ that is both injective and projective, then  $M$ is si.e.r.
\end{proposition}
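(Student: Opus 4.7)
The plan is to combine the injective and projective properties of $E$ through a two-step extension argument analogous to the proof of Lemma \ref{lem:subinjectivityclosedunderextension}. First, I would establish a working characterization of $\tsi(M)$: since $E$ is injective and contains $M$, a module $N$ belongs to $\tsi(M)$ if and only if every homomorphism $M \to N$ extends along the inclusion $M \hookrightarrow E$ to a homomorphism $E \to N$. The forward direction is immediate from the definition. For the converse, given any embedding $M \hookrightarrow K$, injectivity of $E$ extends the inclusion $M \hookrightarrow E$ to some $\alpha : K \to E$; composing $\alpha$ with the hypothesized extension $E \to N$ yields the required lift $K \to N$ of the given map $M \to N$.

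With this in hand, take a short exact sequence $0 \to A \xrightarrow{\iota} B \xrightarrow{\pi} C \to 0$ in $\Mod$ with $A, C \in \tsi(M)$, together with an arbitrary homomorphism $\phi : M \to B$. Applying $C \in \tsi(M)$ to the composition $\pi\phi : M \to C$ produces some $\psi : E \to C$ with $\psi|_M = \pi\phi$. Since $E$ is projective and $\pi$ is an epimorphism, $\psi$ lifts to a homomorphism $\tilde{\psi} : E \to B$ satisfying $\pi\tilde{\psi} = \psi$. Because $\pi(\tilde{\psi}|_M - \phi) = 0$, the difference $\tilde{\psi}|_M - \phi$ factors through $\iota$, yielding some $\delta : M \to A$ with $\iota\delta = \tilde{\psi}|_M - \phi$.

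Finally, since $A \in \tsi(M)$, the map $\delta$ extends to some $\eta : E \to A$, and then $\tilde{\psi} - \iota\eta : E \to B$ restricts on $M$ to $\tilde{\psi}|_M - \iota\delta = \phi$. By the characterization, this shows $B \in \tsi(M)$, so $\tsi(M)$ is closed under extensions and $M$ is si.e.r.

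The argument has no serious obstacle; its substance lies in using each of the two properties of $E$ exactly once. Injectivity of $E$ provides the clean working characterization of membership in $\tsi(M)$ via factorization through $E$, while projectivity of $E$ drives the decisive lifting step that reduces the extension problem for $\phi$ to the two subproblems resolved by $A, C \in \tsi(M)$.
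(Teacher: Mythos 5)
Your proof is correct and follows essentially the same route as the paper's: extend $\pi\phi$ to a map $E\to C$, lift it through $\pi$ using projectivity of $E$, and repair the discrepancy (which lands in $A$) using $A\in\tsi(M)$. The only cosmetic differences are that you make explicit the reduction---via injectivity of $E$---to extending homomorphisms only along $M\hookrightarrow E$, a step the paper uses tacitly, and you avoid the paper's unnecessary case split on whether $f(M)\subseteq A$.
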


\begin{proof} Let      $ \xymatrix{ 0 \ar[r] & A \ar[r]^{\alpha}  & B   \ar[r]^{\beta} & C \ar[r] & 0}  $ be a short exact sequence of modules such that $M\in \si(A) \cap \si(C).$ Without loss of generality, we may assume that, $A \leq B,$ $C=B/A$, $\alpha=e$ and $\beta=\pi$, where $e$ and $\pi$ are the inclusion and the canonical epimorphism, respectively.

Let \( f : M \to B \) be a homomorphism. We consider two cases based on the image of \( f \). Suppose first that \( f(M) \subseteq A \).  
Since \( M \in \si(A) \), there exists a homomorphism \( k : E \to A \) such that \( k  i = f \), where \( i : M \to E \) is the inclusion map. Thus, \( k \) extends \( f \).
Now suppose that \( f(M) \nsubseteq A \).  
Then \( \pi  f : M \to B/A \) is a nonzero homomorphism, where \( \pi : B \to B/A \) is the canonical epimorphism. Since \( M \in \si(B/A) \), there exists a homomorphism \( g : E \to B/A \) such that \( \pi  f = g  i \).

Now, since \( E \) is projective by assumption, there exists a homomorphism \( h : E \to B \) such that \( \pi  h = g \). Therefore,
\[
\pi  f = g  i = (\pi  h)  i = \pi  (h  i),
\]
which implies that \( \pi  (f - h  i) = 0 \), so \( (f - h  i)(M) \subseteq \ker(\pi) = A \).

Consider the homomorphism \( f - h  i : M \to A \). Since \( M \in \si(A) \), there exists a homomorphism \( t : E \to A\) such that \( f - h  i = t  i \). It follows that
\[
f = (h - t)  i,
\]
so \( h - t \) is a homomorphism from \( E \to B \) that extends \( f \).

\[
\xymatrix@R=1.5cm@C=1.8cm{
  & 0 \ar[r] & M \ar[r]^i \ar[d]_f &  E  \ar@{-->}[dl] ^h \ar@{-->}[d]^g  \ar@{-->}[dll]^t  \\
0 \ar[r] & A \ar[r] & B \ar[r]^\pi & B/A \ar[r]  & 0
}
\]
In either case, we have shown that any homomorphism \( f : M \to B \) extends to a homomorphism \( E \to B \), which implies \( M \in \si(B) \). This completes the proof.
\end{proof}

Dual to Proposition \ref{lem:injdm} we have the following.

\begin{proposition}\label{prop:sper} Let \( M \) be a module. If \( M \) is an epimorphic image of a module that is both projective and injective, then \( M \) is sp.e.r.
\end{proposition}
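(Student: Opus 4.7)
The plan is to carry out the dual of the argument used to establish Proposition \ref{lem:injdm}. Fix an epimorphism $\pi: E \to M$ furnished by the hypothesis, with $E$ both projective and injective. The first step is a reformulation that lets the proof proceed in close analogy with the subinjective case: for any module $N$, the membership $N \in \tspp(M)$ is equivalent to the assertion that every homomorphism $\phi: N \to M$ admits a lift $\tilde{\phi}: N \to E$ satisfying $\pi \tilde{\phi} = \phi$. The forward direction is immediate because $\pi$ is one particular epimorphism onto $M$; the backward direction uses projectivity of $E$ to lift $\pi$ through any given epimorphism $g: D \to M$ to some $\tilde{\pi}: E \to D$, after which the desired lift of $\phi$ through $g$ is obtained as $\tilde{\pi}\tilde{\phi}$.

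Now consider a short exact sequence $0 \to A \xrightarrow{\alpha} B \xrightarrow{\beta} C \to 0$ with $A, C \in \tspp(M)$, and let $\phi: B \to M$ be arbitrary. Using the reformulation and $A \in \tspp(M)$, the composite $\phi \alpha: A \to M$ lifts to some $\tilde{\phi}_A: A \to E$ with $\pi \tilde{\phi}_A = \phi \alpha$. Since $E$ is injective and $\alpha$ may be taken to be an inclusion, $\tilde{\phi}_A$ extends to a homomorphism $\Phi: B \to E$ with $\Phi \alpha = \tilde{\phi}_A$. The ``error'' $\pi \Phi - \phi: B \to M$ then vanishes on $\alpha(A)$, hence factors uniquely as $\psi \beta$ for some $\psi: C \to M$. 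Using the reformulation once more, $C \in \tspp(M)$ produces a lift $\tilde{\psi}: C \to E$ of $\psi$, and a direct check shows that the combination $\Phi - \tilde{\psi}\beta: B \to E$ satisfies $\pi(\Phi - \tilde{\psi}\beta) = \phi$. By the reformulation again, this shows $B \in \tspp(M)$, so $M$ is sp.e.r.

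The main conceptual step is the opening reformulation: it converts the question about lifting through arbitrary epimorphisms onto $M$ into a question about lifting through the single fixed epimorphism $\pi: E \to M$, and this is precisely what makes the proof parallel to Proposition \ref{lem:injdm}. This is where projectivity of $E$ is used. After this, injectivity of $E$ plays the role previously occupied by injectivity of the ambient module in the subinjective proof, being used to extend the partial lift from $A$ to $B$. Both halves of the hypothesis on $E$ are thus genuinely needed, in exact analogy with the dual result.
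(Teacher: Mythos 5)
Your proof is correct and follows essentially the same route as the paper's: lift $\phi\alpha$ through the fixed epimorphism $\pi: E \to M$, extend that lift along $A \hookrightarrow B$ using injectivity of $E$, factor the discrepancy $\pi\Phi - \phi$ through $C$, lift once more, and combine. The only difference is that you make explicit (and correctly justify, via projectivity of $E$) the reduction of $M$-subprojectivity to lifting through the single epimorphism $\pi$, a standard equivalence the paper uses without comment.
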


\begin{proof} Suppose $M\in \spp(A) \cap \spp (B/A).$ Let $g: P \to M$ be an epimorphism with $P$ is both projective and injective. Let $B$ be a module, $A \leq B$ and $f: B \to M$ a homomorphism. We need to show that, there is a homomorphism $k: B \to P$ such that $f=gk.$ 
 Let $i:A \to B$ and $\pi : B \to B/A$ be  the usual inclusion and the canonical epimorphism, respectively. By the hypothesis $M \in \spp (A),$ thus there is a homomorphism $h:   A \to P$ such that $fi=gh.$ By injectivity of $P,$ there is a homomorphism $t: B \to P$ such that, $ti=h.$ Then $fi=gti$, and so $(f-gt)i=0.$ That is, $A \subseteq ker (f-gt).$ 
 
 \[
\xymatrix@R=1.5cm@C=1.8cm{
0 \ar[r] & A \ar[r]^i \ar@{-->}[d]_h & B \ar@{-->}[dl]_t  \ar[r]^\pi \ar[d]_f & B/A \ar[r] \ar@{-->}[dll]_{\quad  \,\,\, u}& 0 \\
 & P  \ar[r]_g & M \ar[r] & 0
}
\]
By the factor theorem, there is a homomorphism $s: B/A \to M$ such that, $gt-f=s\pi .$ Now, as $M \in \spp (B/A)$ by the hypothesis, there is a homomorphism $u: B/A \to P$ such that, $gu=s.$ We obtain that, $$gu\pi= s \pi=gt-f \Rightarrow f=g(t-u\pi).$$Thus, the map $k$ we look for is $t-u \pi .$ Therefore, $M\in \spp (B).$ This completes the proof.
\end{proof}

The subsequent two consecutive results follow directly from Propositions \ref{lem:injdm} and \ref{prop:sper}.

\begin{corollary} The following hold for a module $M$.

\begin{enumerate}
\item  If $M$ is a submodule of a module that is both injective and projective, then every submodule of $M$ is si.e.r.

\item If \( M \) is an epimorphic image of a module that is both projective and injective, then every factor module of $M$ is sp.e.r.

\end{enumerate}
\end{corollary}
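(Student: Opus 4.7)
The plan is to observe that the two parts follow immediately from Propositions \ref{lem:injdm} and \ref{prop:sper} together with the transitivity of the relations ``is a submodule of'' and ``is an epimorphic image of''.

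For part $(1)$, suppose $M$ embeds into a module $E$ which is both injective and projective, and let $N \leq M$. I would simply note that the inclusion $N \hookrightarrow M \hookrightarrow E$ exhibits $N$ as a submodule of $E$. Since $E$ is still injective and projective, Proposition \ref{lem:injdm} applied to $N$ yields that $N$ is si.e.r.

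For part $(2)$, assume there is an epimorphism $\varphi : P \to M$ with $P$ both projective and injective, and let $N$ be any factor module of $M$, say $N = M/K$ with canonical projection $\pi : M \to M/K$. Then the composition $\pi \circ \varphi : P \to N$ is again an epimorphism, so $N$ is an epimorphic image of the same injective-projective module $P$. Proposition \ref{prop:sper} then gives that $N$ is sp.e.r.

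There is essentially no obstacle here; both statements are immediate consequences of the fact that the classes ``submodules of $E$'' and ``epimorphic images of $P$'' are closed under further taking submodules and factor modules respectively, so the hypothesis of the preceding propositions is inherited. I would present the argument as a short paragraph for each of the two items without introducing any additional machinery.
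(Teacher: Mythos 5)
Your argument is correct and matches the paper's intent exactly: the paper gives no separate proof, stating only that the corollary "follows directly" from Propositions \ref{lem:injdm} and \ref{prop:sper}, and your observation that submodules of $E$ (resp.\ epimorphic images of $P$) are closed under further submodules (resp.\ quotients) is precisely the missing one-line justification.
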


\begin{corollary} Let $R$ be a right self-injective ring. Then every right ideal is si.e.r, and every factor module of $R$ is sp.e.r.
\end{corollary}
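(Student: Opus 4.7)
The claim is a direct application of the two propositions just established. The plan is to observe that a right self-injective ring $R$, regarded as a right module over itself, is simultaneously injective (by hypothesis) and projective (being free of rank one). Thus $R$ itself is the prototypical example of a module that is both injective and projective, and the preceding propositions apply with $E = R$.

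For the first assertion, let $I$ be any right ideal of $R$. Then $I$ is a submodule of $R$, and $R$ is injective and projective. By Proposition \ref{lem:injdm} applied to $M = I$ and $E = R$, we conclude that $I$ is si.e.r. For the second assertion, let $N$ be any factor module of $R$; that is, $N = R/I$ for some right ideal $I$, so the canonical map $R \to N$ is an epimorphism from a module that is both projective and injective. By Proposition \ref{prop:sper} with $M = N$, we conclude that $N$ is sp.e.r.

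There is no genuine obstacle here; the only point to verify is the trivial observation that right self-injectivity of $R$ means precisely that the regular module $R_R$ lies in both $\inj$ and $\proj$, which is exactly the hypothesis needed to invoke the two propositions.
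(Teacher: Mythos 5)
Your proof is correct and is exactly the argument the paper intends: since $R_R$ is projective (free) and injective (by hypothesis), right ideals are submodules of an injective--projective module and factor modules of $R$ are epimorphic images of one, so Propositions \ref{lem:injdm} and \ref{prop:sper} apply directly. The paper gives no separate proof, stating that the corollary follows directly from those two propositions, which is what you have written out.
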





A ring $R$ is said to be QF-ring if $R$ is right noetherian and right self-injective.  $R$ is $QF$ if and only if $\inj =\proj$ (see, \cite{Lammodules}).
Hence we have the following.

\begin{corollary}\label{cor:injdomain}For a  QF-ring $R$, the following hold.
\begin{enumerate}
\item Every right  $R$-module  is si.e.r.
\item Every left  $R$-module  is si.e.r.
\item Every right  $R$-module  is sp.e.r.
\item Every left  $R$-module  is sp.e.r.
\end{enumerate}
\end{corollary}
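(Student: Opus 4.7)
My plan is to deduce all four parts of the corollary directly from Propositions~\ref{lem:injdm} and~\ref{prop:sper}, using the fundamental characterization of QF-rings that $\inj=\proj$ on each side. Since QF is a left-right symmetric property, it suffices to prove parts (1) and (3); the left-module statements (2) and (4) then follow by applying the same argument to $R^{\mathrm{op}}$.

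For part (1), given any right $R$-module $M$, I would embed $M$ into its injective hull $E(M)$. Since $R$ is QF, every injective right $R$-module is projective, so $E(M)$ is both injective and projective. Hence $M$ is a submodule of an injective-and-projective module, and Proposition~\ref{lem:injdm} immediately yields that $M$ is si.e.r.

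For part (3), given any right $R$-module $M$, there is a free module $F$ and an epimorphism $F \to M \to 0$. The module $F$ is projective, and because $R$ is QF we have $\inj=\proj$, so every projective module (in particular every direct sum of copies of $R$, since such sums are injective over a QF-ring) is also injective. Thus $F$ is both projective and injective, and Proposition~\ref{prop:sper} gives that $M$ is sp.e.r.

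The only step that requires a moment of thought is the use of the fact that arbitrary direct sums of copies of $R$ remain injective over a QF-ring; this is a standard consequence of QF-rings being right noetherian, since over a right noetherian ring direct sums of injectives are injective. With this in hand, the whole argument is a direct application of the two structural propositions already established. There is no genuine obstacle beyond recording these references, since the $\inj=\proj$ characterization of QF collapses the hypotheses of both propositions to hold automatically for all modules.
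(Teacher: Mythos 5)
Your proof is correct and follows the same route the paper intends: over a QF-ring $\inj=\proj$, so every module embeds in an injective-and-projective module (its injective hull) and is a quotient of a projective-and-injective module (a free module), whence Propositions~\ref{lem:injdm} and~\ref{prop:sper} apply, with (2) and (4) obtained from the left-right symmetry of the QF condition. The extra remark about direct sums of injectives over a noetherian ring is a reasonable way to justify the injectivity of $F$, though it also follows immediately from $\proj\subseteq\inj$.
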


\begin{corollary} Let $M$ be a module and $K \leq M$.  
\begin{enumerate}

\item  If $M$ is si.e.r, then  $$ M \in \si (K) \cap \si(M/K) \iff M  \in \inj $$

\item  If $M$ is sp.e.r, then  $$M \in \spp (K) \cap \spp(M/K) \iff  M \in \proj.$$

\end{enumerate}
\end{corollary}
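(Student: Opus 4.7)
The plan is to handle both parts by noting that the nontrivial direction in each reduces to the observation that $N\in\si(N)$ forces $N$ to be injective, and dually $N\in\spp(N)$ forces $N$ to be projective. The reverse implications are automatic, since $\inj\subseteq\si(A)$ for every $A\in\Mod$, and dually $\proj\subseteq\spp(A)$ for every $A\in\Mod$.

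For part (1), I would proceed as follows. Assume $M$ is si.e.r.\ and that $M\in\si(K)\cap\si(M/K)$. Apply the si.e.r.\ hypothesis to the short exact sequence
\[
0\to K\to M\to M/K\to 0
\]
to conclude $M\in\si(M)$. Now I unwind the definition of $M$-subinjectivity: for every extension $C$ of $M$ and every homomorphism $\phi\colon M\to M$, there is a $\varphi\colon C\to M$ with $\varphi|_M=\phi$. Specialize to $C=E(M)$, the injective hull of $M$, and $\phi=1_M$; the resulting $\varphi\colon E(M)\to M$ is a retraction of the inclusion $M\hookrightarrow E(M)$, so $M$ is a direct summand of $E(M)$ and therefore injective.

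Part (2) is proved dually. Assume $M$ is sp.e.r.\ and $M\in\spp(K)\cap\spp(M/K)$. Applying sp.e.r.\ to the same short exact sequence yields $M\in\spp(M)$. Choose any epimorphism $g\colon P\to M$ with $P$ projective, and take $f=1_M$. By $M$-subprojectivity of $M$ there exists $h\colon M\to P$ with $gh=1_M$, exhibiting $M$ as a direct summand of $P$; hence $M\in\proj$.

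I do not anticipate a genuine obstacle: the only thing worth being careful about is making sure the single instantiation $\phi=1_M$ (respectively $f=1_M$) against an injective extension (resp.\ a projective cover/precover) of $M$ is enough to produce the splitting. Once $M\in\si(M)$ or $M\in\spp(M)$ has been extracted from the extension-reflecting hypothesis via the canonical sequence $0\to K\to M\to M/K\to 0$, the rest is essentially a two-line diagonal argument and the standard fact that a module which is a summand of an injective (resp.\ projective) module is injective (resp.\ projective).
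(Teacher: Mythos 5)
Your proof is correct and takes essentially the same route as the paper: apply the extension-reflecting hypothesis to $0\to K\to M\to M/K\to 0$ to obtain $M\in\si(M)$ (resp.\ $M\in\spp(M)$), then conclude that $M$ is injective (resp.\ projective), the converse being immediate since $\inj\subseteq\si(N)$ and $\proj\subseteq\spp(N)$ for all $N$. The paper states the final implication ``$M\in\si(M)$ gives $M$ injective'' without proof, whereas you spell out the standard splitting argument via $E(M)$ and a projective epimorphism; this is a harmless elaboration, not a different approach.
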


\begin{proof} $(1)$ For the necessity, assume that $M \in \si (K) \cap \si(M/K)$. Then $M \in \si (M)$ by Proposition \ref{lem:injdm}. Hence $M$ is injective.
Conversely, suppose $M$ is injective. Then $M \in \si (N)$ for each $N \in \Mod$ Thus sufficiency follows.

$(2)$ Dual to the proof of  $(1).$
\end{proof}


\begin{proposition}\label{cor:ss=fl} Let $M$ be an si.e.r module. Then $$M \in \bigcap_{S \in \mathcal{S}} \si (S) \iff   M \in \bigcap_{A \in \mathcal{FL}} \si (A).$$  
\end{proposition}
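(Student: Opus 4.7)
The forward implication $M \in \bigcap_{A \in \mathcal{FL}} \si(A) \Rightarrow M \in \bigcap_{S \in \mathcal{S}} \si(S)$ is immediate, because every simple module has finite length, so $\mathcal{S} \subseteq \mathcal{FL}$ and the intersection over $\mathcal{FL}$ is contained in the intersection over $\mathcal{S}$.

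For the nontrivial direction, I plan to argue by induction on the composition length of $A \in \mathcal{FL}$. Assume $M$ is si.e.r and $M \in \si(S)$ for every simple module $S$. The base case $n=1$ is exactly the hypothesis, since length-one modules are simple. For the inductive step, suppose the claim holds for every module of composition length strictly less than $n$, and let $A$ have composition length $n \geq 2$. Pick any maximal submodule $A' \leq A$; then $A'$ has composition length $n-1$ and $A/A'$ is simple, giving a short exact sequence
\[
0 \to A' \to A \to A/A' \to 0.
\]
By the inductive hypothesis, $M \in \si(A')$, and by the standing assumption, $M \in \si(A/A')$. Since $M$ is si.e.r, the class $\tsi(M)$ is closed under extensions, so $A \in \tsi(M)$, i.e.\ $M \in \si(A)$. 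This completes the induction.

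I do not expect any real obstacle here: the argument is a straightforward induction on composition length that uses only the definition of si.e.r and the fact that any finite length module admits a composition series whose successive quotients are simple. The only mild subtlety is making sure to invoke the si.e.r property on the correct short exact sequence (the one given by removing the top of a composition series), but since both $A'$ and $A/A'$ land in $\tsi(M)$ by the inductive step and the hypothesis, closure under extensions finishes the argument in one line.
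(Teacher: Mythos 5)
Your proposal is correct and follows essentially the same route as the paper: induction on composition length, applying the si.e.r property to the short exact sequence $0 \to A' \to A \to A/A' \to 0$ obtained from a composition series (the paper uses $S_{n-1} \subset S_n = A$, which is the same decomposition). The only cosmetic difference is that you note $A/A'$ is simple and invoke the standing hypothesis directly, while the paper folds it into the induction hypothesis; both are valid.
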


\begin{proof} Suppose $M \in  \bigcap_{A \in \mathcal{FL}} \si (A).$ Since $\mathcal{S} \subseteq \mathcal{FL},$ we have $M \in \bigcap_{S \in \mathcal{S}} \si (S).$ To prove the reverse containment, suppose $M \in \bigcap_{S \in \mathcal{S}} \si (S).$ Let $0 \neq A \in \mathcal{FL}$ with composition length $n.$  Let $0 \subset S_1  \subset  \cdots \subset S_n =A$ be a composition series of $A.$ The proof is by induction on $n$. In the case $n=1,$ the module $A$ is simple, and so $M \in \si(A)$ by the assumption. Assume that $n >1$ and $M \in \si (B)$ for every module $B$ of composition length less than $n$.  Clearly,  $S_{n-1}$  and $S_n/S_{n-1}$ have composition length less than $n$. Thus $M \in \si(S_{n-1}) \cap \si (S_n /S_{n-1})$ by the induction hypothesis.  By  the hypothesis and Proposition \ref{lem:injdm}, $M$ is subinjectively extension-reflecting. Thus $M \in \si (S_n).$ Therefore $M \in \bigcap_{A \in \mathcal{FL}} \si (A)$ and the proof follows.
\end{proof}

\begin{proposition}\label{prop:C=FG}  Let $A$ be an si.e.r module. Then

$$A \in \bigcap_{N \in \mathcal{C}} \si (N)   \iff   A \in \bigcap_{N \in \mathcal{FG}}\si (N).$$
\end{proposition}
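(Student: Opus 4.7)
The plan is to mirror the proof of Proposition~\ref{cor:ss=fl}, replacing composition length by the minimal number of generators. The implication $(\Leftarrow)$ is immediate, since $\mathcal{C}\subseteq\mathcal{FG}$, so any $A$ subinjective relative to every finitely generated module is in particular subinjective relative to every cyclic one.

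For the nontrivial direction $(\Rightarrow)$, I would assume $A\in\si(C)$ for every $C\in\mathcal{C}$, take an arbitrary nonzero $N\in\mathcal{FG}$, fix a generating set $\{x_1,\dots,x_n\}$, and proceed by induction on $n$. The base case $n=1$ is the hypothesis, since then $N$ is cyclic. For the inductive step, set $K=x_1R+\cdots+x_{n-1}R$, which is generated by $n-1$ elements, so the induction hypothesis gives $A\in\si(K)$. The quotient $N/K$ is cyclic, generated by the image of $x_n$, hence $A\in\si(N/K)$ by hypothesis. Applying the si.e.r.\ assumption on $A$ to the short exact sequence
$$0 \to K \to N \to N/K \to 0$$
yields $A\in\si(N)$, completing the induction and the proof.

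I do not expect any substantive obstacle. The argument is essentially a copy of the proof of Proposition~\ref{cor:ss=fl}, with the composition series $0\subset S_1\subset\cdots\subset S_n=A$ replaced by the filtration $0\subset x_1R\subset x_1R+x_2R\subset\cdots\subset N$, whose successive quotients are cyclic rather than simple. The si.e.r.\ property of $A$ is precisely what is needed at each inductive step to lift subinjectivity across the short exact sequence, and this is the only place where the hypothesis on $A$ enters.
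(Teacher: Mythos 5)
Your proof is correct and follows essentially the same route as the paper: induction on the number of generators, peeling off one generator to get a cyclic quotient and applying the si.e.r.\ hypothesis to the resulting short exact sequence. If anything, your step invoking the si.e.r.\ assumption directly is cleaner than the paper's, which at that point cites Corollary~\ref{cor:injdomain} (a QF-ring statement) where the extension-reflecting hypothesis on $A$ is what is actually being used.
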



\begin{proof} Clearly,
\[
\bigcap_{N \in \mathcal{FG}} \si(N) \subseteq \bigcap_{N \in \mathcal{C}} \si(N),
\]
since every cyclic module is finitely generated. To prove the reverse inclusion, suppose
\[
A \in \bigcap_{N \in \mathcal{C}} \si(N),
\]
i.e., \( A \in \si(N) \) for every cyclic right \( R \)-module \( N \). Let \( M \) be an arbitrary finitely generated module, and suppose it is generated by \( k \) elements \( m_1, m_2, \ldots, m_k \). We will prove by induction on \( k \) that \( A \in \si(M) \).  If \( k = 1 \), then \( M \) is cyclic and the claim \( A \in \si(M) \) follows by the assumption.
Assume \( k > 1 \), and suppose the claim holds for all finitely generated modules that can be generated by fewer than \( k \) elements. Consider the submodule
$K = m_2 R + \cdots + m_k R.$
Then the quotient module \( M/K \) is cyclic, since it is generated by the image of \( m_1 \) in \( M/K \). By assumption, \( A \in \si(M/K) \). Also, by the induction hypothesis (since \( K \) is generated by \( k - 1 \) elements), \( A \in \si(K) \).

By Corollary \ref{cor:injdomain}, it follows that
\[
A \in \si(K) \cap \si(M/K) \subseteq \si(M).
\]

Therefore, by induction, \( A \in \si(M) \) for every finitely generated module \( M \), which shows that
\[
A \in \bigcap_{N \in \mathcal{FG}} \si(N).
\]

Hence,
\[
\bigcap_{N \in \mathcal{C}} \si(N) \subseteq \bigcap_{N \in \mathcal{FG}} \si(N),
\]
completing the proof of equality.
\end{proof}

\begin{proposition}\label{prop:sp.e.r} Let $M$ be an sp.e.r module. Then

\begin{enumerate}

\item $M \in \bigcap_{S \in \mathcal{S}} \spp (S) \iff   M \in \bigcap_{A \in \mathcal{FL}} \spp (A).$
\bigskip
\item $M \in \bigcap_{N \in \mathcal{C}} \spp (N)   \iff   M \in \bigcap_{N \in \mathcal{FG}}\spp (N).$

\end{enumerate}

\end{proposition}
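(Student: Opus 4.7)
The proof will proceed by mirroring verbatim the arguments of Propositions \ref{cor:ss=fl} and \ref{prop:C=FG}, with subinjectivity replaced by subprojectivity and the si.e.r hypothesis replaced by the sp.e.r hypothesis of $M$. In both parts the ``$\Leftarrow$'' direction is immediate because $\mathcal{S} \subseteq \mathcal{FL}$ and $\mathcal{C} \subseteq \mathcal{FG}$, so the content lies entirely in the converses, each of which is proved by a straightforward induction.

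For part (1), assume $M \in \spp(S)$ for every simple $S$. Given $0 \neq A \in \mathcal{FL}$ of composition length $n$, I would induct on $n$. The case $n=1$ is immediate since $A$ is simple. For $n > 1$, I pick a composition series $0 \subset S_1 \subset \cdots \subset S_n = A$; then $S_{n-1}$ has composition length $n-1$ and $A/S_{n-1}$ is simple, so the inductive hypothesis (together with the base assumption applied to the simple quotient) gives
\[
M \in \spp(S_{n-1}) \cap \spp(A/S_{n-1}).
\]
Since $M$ is sp.e.r, the short exact sequence $0 \to S_{n-1} \to A \to A/S_{n-1} \to 0$ forces $M \in \spp(A)$, completing the induction.

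For part (2), assume $M \in \spp(N)$ for every cyclic $N$. Given a finitely generated module $N$ generated by $k$ elements $m_1,\dots,m_k$, I would induct on $k$. The case $k=1$ is immediate. For $k > 1$, set $K = m_2 R + \cdots + m_k R$. Then $N/K$ is cyclic (generated by the image of $m_1$), so $M \in \spp(N/K)$ by assumption, while $K$ is generated by $k-1$ elements and hence $M \in \spp(K)$ by induction. The sp.e.r hypothesis applied to $0 \to K \to N \to N/K \to 0$ then yields $M \in \spp(N)$, and the converse inclusion follows.

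Since each step of each induction is exactly the subprojective transcription of the corresponding step in the proofs of Propositions \ref{cor:ss=fl} and \ref{prop:C=FG}, I do not anticipate any real obstacle; the sp.e.r property has been set up precisely to make this dual induction go through. The only small care needed is in part (2) to note that both $K$ and $N/K$ land in the relevant domain at the inductive step---$N/K$ via the cyclic hypothesis and $K$ via the induction---so that the sp.e.r closure applies cleanly to the extension defining $N$.
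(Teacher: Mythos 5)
Your proposal is correct and is exactly what the paper intends: its proof of this proposition consists of the single line that the arguments are dual to those of Propositions \ref{cor:ss=fl} and \ref{prop:C=FG}, and your write-up is precisely that dualization, with the inductions on composition length and on number of generators closed by the sp.e.r hypothesis. If anything, your version is cleaner, since you invoke the sp.e.r assumption directly where the paper's originals detour through Proposition \ref{lem:injdm} and Corollary \ref{cor:injdomain}.
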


\begin{proof} The proofs are dual to the proof of   Proposition \ref{cor:ss=fl} and Proposition \ref{prop:C=FG}.
\end{proof}

The following result describe cyclic semisimple modules over semilocal rings.

\begin{proposition}\label{semilocal} Let $R$ be a semilocal ring. Then 

\begin{enumerate}
\item A semisimple module $M$ is cyclic if and only if it is a homomorphic image of $R/J(R).$

\item Any direct sum of nonisomorphic simple modules is cyclic.

\item If, in addition, $R$ is right si.e.r., then $\si(R/J(R))$ is the smallest element of  $\sip(\mathcal{FL}).$
\end{enumerate}
\end{proposition}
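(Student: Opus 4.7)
For part~(1), my plan is to exploit the fact that every semisimple right module is annihilated by the Jacobson radical $J(R)$, since $J(R)$ is contained in the annihilator of each simple right $R$-module. Thus if $M=xR$ is cyclic and semisimple, then $xJ(R)=0$, so the surjection $R\to M$, $r\mapsto xr$, descends to an epimorphism $R/J(R)\to M$. Conversely, $R/J(R)$ is cyclic, so every homomorphic image of it is cyclic (and semisimple), giving the other direction at no cost.

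Part~(2) will then fall out of part~(1). Since $R$ is semilocal, there are only finitely many isomorphism classes of simple right $R$-modules $S_1,\dots,S_n$, and each $S_i$ occurs as a direct summand of $R/J(R)$. Hence any direct sum $\bigoplus_{i\in I}S_i$ of pairwise nonisomorphic simples is a quotient of $R/J(R)$ via the obvious projection onto the chosen summands, and by part~(1) it is cyclic.

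For part~(3), I first record that $R/J(R)$ is a semisimple artinian $R$-module and therefore lies in $\mathcal{FL}$, so $\si(R/J(R))\in\sip(\mathcal{FL})$. To establish minimality, I fix $A\in\mathcal{FL}$ and an arbitrary $M\in\si(R/J(R))$ and aim to show $M\in\si(A)$. A short extend-then-project argument (compose a map $N\to X_1$ with the inclusion into $X_1\oplus X_2$, extend by subinjectivity, then project) shows that $\si(X_1\oplus X_2)\subseteq\si(X_1)\cap\si(X_2)$, so subinjectivity domains reverse inclusion along summands. Since every simple right $R$-module is isomorphic to one of the $S_i$, and each $S_i$ is a summand of $R/J(R)$, this gives $M\in\si(S)$ for every simple $S$. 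At this point the si.e.r.\ hypothesis enters: $M$ is itself si.e.r., so Proposition~\ref{cor:ss=fl} applies and upgrades $M\in\bigcap_{S\in\mathcal{S}}\si(S)$ to $M\in\bigcap_{B\in\mathcal{FL}}\si(B)$, which in particular gives $M\in\si(A)$.

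I do not foresee any serious obstacle; the only substantive input is Proposition~\ref{cor:ss=fl}, and indeed its role in part~(3) is precisely to bridge from subinjectivity with respect to simples to subinjectivity with respect to all finite length modules. The one genuine bookkeeping hazard is keeping straight that $N\in\si(M)$ records that $M$ is $N$-subinjective, whereas the profile $\sip(\mathcal{FL})$ varies the argument of $\si$; confusing these would flip every containment in part~(3), so I would state the summand lemma explicitly before invoking it.
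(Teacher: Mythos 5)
Your proposal is correct and follows essentially the same route as the paper: part (1) via $J(R)$ killing semisimples (the paper phrases this as $\ker f$ being an intersection of maximal right ideals), part (2) by realizing the sum as a summand of $R/J(R)$, and part (3) by reducing to simples and invoking Proposition~\ref{cor:ss=fl}. Your write-up of (3) actually supplies the summand step $\si(R/J(R))=\bigcap_{S\in\mathcal{S}}\si(S)$ that the paper's one-line proof leaves implicit, and your reading of the hypothesis as "every module is si.e.r." is the one needed for the cited proposition to apply to an arbitrary $M\in\si(R/J(R))$.
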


\begin{proof} $(1)$ Suppose $M$ is cyclic and semisimple. Let $f: R \to M$ be an epimorphism. Clearly, $R/ker(f)$ is semisimple, and so $ker(f)$ is an intersection of maximal right ideals. Then $J(R) \subseteq ker(f),$ and this leads to an epimorphism $g: R/J(R) \to M.$ This proves the necessity. Conversely, as $R/J(R)$ is cyclic and semisimple, any homomorphic image of it is cyclic and semisimple as well.

$(2)$ Let $M$ be a direct sum of nonisomorphic simple modules. Then by the semilocal assumption, $M$ is isomorphic to a direct summand of $R/J(R).$ Hence $M$ is cyclic by $(1).$

$(3)$ Since $R$ is semilocal, $R/J(R)$ is semisimple and has finite length. Thus $R/J(R) \in \mathcal{FL}.$ Hence, $\si(R/J(R))$ is the smallest element of  $\sip(\mathcal{FL})$ by Proposition \ref{cor:ss=fl}.
\end{proof}



\begin{corollary}\label{prop:smallest}  Let $R$ be  a $QF$ ring and $\mathcal{S}$ be a complete set of nonisomorphic  simple modules. Then 
\begin{enumerate}

\item $\si( \oplus_{S\in \mathcal{S}} S)= \inj$. 

\item $\si (\oplus_{S\in \mathcal{S}} S)$ is the smallest element of  $\sip(\mathcal{CP}).$

\end{enumerate}
\end{corollary}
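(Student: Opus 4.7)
For (1), the inclusion $\inj\subseteq\si(\bigoplus_{S\in\mathcal{S}}S)$ is automatic. For the reverse containment, I would first identify $\si(\bigoplus_S S)$ with $\si(R/J(R))$: since $R$ is QF and hence semilocal, $R/J(R)\cong\bigoplus_{S}S^{m_S}$ with $m_S\geq 1$, and the identities $\si(M\oplus N)=\si(M)\cap\si(N)$ and $\si(M^n)=\si(M)$ give $\si(\bigoplus_S S)=\bigcap_S\si(S)=\si(R/J(R))$. By Corollary~\ref{cor:injdomain} every module over a QF ring is si.e.r, so Proposition~\ref{semilocal}(3) applies and $\si(R/J(R))$ is the smallest element of $\sip(\mathcal{FL})$.

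The heart of the argument is showing that this smallest element actually coincides with $\inj$. Given $A\in\bigcap_S\si(S)$, the goal is to deduce $A=E(A)$. Since $R$ is Artinian, $A$ is injective iff $\Ext^{1}_R(T,A)=0$ for every simple $T$. Applying $\Hom(T,-)$ to $0\to A\to E(A)\to E(A)/A\to 0$ and using $\Hom(T,E(A))=\Hom(T,\Soc(E(A)))=\Hom(T,\Soc(A))$ (because $A$ is essential in $E(A)$) together with $\Ext^{1}(T,E(A))=0$, one obtains $\Ext^{1}(T,A)\cong\Hom(T,E(A)/A)$. Thus it suffices to prove that $E(A)/A$ has zero socle, hence, over an Artinian ring, is zero.

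To force $E(A)/A=0$ I plan to use the QF duality $(-)^{*}=\Hom_R(-,R)$, which is exact and transports the inclusion $A\hookrightarrow E(A)$ to an epimorphism $E(A)^{*}\twoheadrightarrow A^{*}$ of left modules with a small kernel $K$. The hypothesis translates to: every homomorphism from a simple left module into $A^{*}$ lifts through this epimorphism. Since $E(A)^{*}$ is also injective over a QF ring, the long exact sequence of $\Hom(T,-)$ for $0\to K\to E(A)^{*}\to A^{*}\to 0$ yields $\Ext^{1}(T,K)=0$ for every simple left $T$; hence $K$ is injective, so a direct summand of $E(A)^{*}$; combined with $K$ being superfluous this forces $K=0$, and dualizing back gives $A=E(A)$. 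The main obstacle is making the Matlis-type duality work for arbitrary (not necessarily finitely generated) $A$; for this I would invoke the decomposition of injectives over the Noetherian ring $R$ into indecomposable summands $E(S_i)$, each a finitely generated projective-injective, to reduce the kernel-splitting step to the finitely generated case.

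Part (2) is then immediate from (1) together with Proposition~\ref{semilocal}(2), which guarantees that $\bigoplus_{S\in\mathcal{S}} S$ is cyclic over the semilocal ring $R$ and hence lies in $\mathcal{CP}$; since $\si(\bigoplus_S S)=\inj$ by (1) and $\inj\subseteq\si(M)$ for every module $M$, the domain $\si(\bigoplus_{S\in\mathcal{S}} S)$ is the smallest element of $\sip(\mathcal{CP})$.
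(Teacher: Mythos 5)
Your part (2) is fine and is essentially the paper's route: once $\si(\oplus_{S\in\mathcal{S}}S)=\inj$ is known, minimality in $\sip(\mathcal{CP})$ follows from $\inj\subseteq\si(M)$ for every $M$ together with Proposition~\ref{semilocal}(2) (cyclicity of $\oplus_{S}S$ over the semilocal ring $R$). The divergence, and the problem, is in part (1). The paper disposes of (1) in one line by citing \cite[Proposition 32]{ABE} for the indigence of $\oplus_{S\in\mathcal{S}}S$ over a QF ring; you instead set out to reprove this from scratch via $(-)^{*}=\Hom_R(-,R)$, and that argument has a genuine gap exactly at the point you yourself flag as ``the main obstacle.''

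Concretely, two steps of your dualization are valid only when $A$ is finitely generated, because $\Hom_R(-,R)$ is a duality only between the categories of \emph{finitely generated} right and left modules over a QF ring. First, the claim that $K=\ker\bigl(E(A)^{*}\twoheadrightarrow A^{*}\bigr)$ is superfluous is precisely the statement that the dual of the essential monomorphism $A\hookrightarrow E(A)$ is a superfluous epimorphism; this is formal under a duality but is unproved (and not obvious) when $A$ is infinitely generated, where $E(A)^{*}$ is a product $\prod_j E_j^{*}$ rather than the dual of a finitely generated module. Second, ``dualizing back gives $A=E(A)$'' needs the natural map $A\to A^{**}$ to be an isomorphism, which again fails for infinitely generated $A$. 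Your proposed repair --- decomposing $E(A)$ into finitely generated indecomposable projective-injective summands --- does not obviously reduce to the finitely generated case: $A$ need not decompose compatibly with $E(A)$, and the hypothesis $A\in\bigcap_{S}\si(S)$ is not known to pass to finitely generated submodules of $A$ (subinjectivity domains are not closed under submodules unless the module is injective). So as written your argument establishes (1) only for finitely generated members of $\si(\oplus_S S)$, whereas the asserted equality with $\inj$ quantifies over all modules. Either supply the missing reduction, give a duality-free argument, or simply quote \cite[Proposition 32]{ABE} as the paper does. (Your opening identification of $\si(\oplus_S S)$ with $\si(R/J(R))$ and with the smallest element of $\sip(\mathcal{FL})$ is correct but ends up unused once the equality with $\inj$ is in hand.)
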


\begin{proof}  $(1)$ $\oplus_{S\in \mathcal{S}} S $ is indigent by \cite[Proposition 32]{ABE}. Thus $\si(\oplus_{S\in \mathcal{S}} S )=\inj$.

$(2)$ Since $R$ is QF, it is artinian and right si.e.r. Thus $\mathcal{C} \subseteq \mathcal{FL}.$ Now, the proof follows by Proposition \ref{semilocal}. 
\end{proof}

Over a QF-ring every  finitely generated right module has finite length. 

\begin{corollary}\label{QF-inj} Let $R$ be a QF-ring. Then, 

$$\inj=\bigcap_{N \in \mathcal{S}} \si (N)=\bigcap_{N \in \mathcal{FL}} \si (N)=\bigcap_{N \in \mathcal{C}} \si (N)=\bigcap_{N \in \mathcal{FG}}\si (N).$$
\end{corollary}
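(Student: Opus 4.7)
The plan is to chain together the established closure results with the fact that a QF-ring is both right artinian and every right module over it is si.e.r.\ (Corollary \ref{cor:injdomain}), and then close up the chain using the indigent module from Corollary \ref{prop:smallest}(1).

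First I would record the trivial one-sided inclusions. Since $\inj\subseteq \si(N)$ for every module $N$, the class $\inj$ sits inside each of the four intersections. Moreover, from $\mathcal{S}\subseteq \mathcal{FL}$ and $\mathcal{C}\subseteq \mathcal{FG}$ one reads off
\[
\bigcap_{N\in\mathcal{FL}}\si(N)\ \subseteq\ \bigcap_{N\in\mathcal{S}}\si(N)\qquad\text{and}\qquad
\bigcap_{N\in\mathcal{FG}}\si(N)\ \subseteq\ \bigcap_{N\in\mathcal{C}}\si(N).
\]

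Next I would upgrade these to equalities. By Corollary \ref{cor:injdomain}, every right $R$-module is si.e.r.\ when $R$ is QF. Hence Propositions \ref{cor:ss=fl} and \ref{prop:C=FG} apply to every module, giving
\[
\bigcap_{N\in\mathcal{S}}\si(N)=\bigcap_{N\in\mathcal{FL}}\si(N)\qquad\text{and}\qquad
\bigcap_{N\in\mathcal{C}}\si(N)=\bigcap_{N\in\mathcal{FG}}\si(N).
\]
Because a QF-ring is right artinian, every finitely generated right module has finite composition length; that is, $\mathcal{FG}=\mathcal{FL}$ as classes, so the two intersections over $\mathcal{FG}$ and $\mathcal{FL}$ coincide. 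Combining everything, the four intersections in the statement are all equal.

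It remains to identify this common class with $\inj$. Here I would invoke Corollary \ref{prop:smallest}(1): the module $T:=\bigoplus_{S\in\mathcal{S}} S$, taken over a complete set of nonisomorphic simples, is indigent, so $\si(T)=\inj$. Because a QF-ring is semilocal, this direct sum is finite. Using the standard coordinate-wise extension argument, a map $M\to T$ extends to any extension of $M$ iff each of its $k$ projections does, whence $\si(T)=\bigcap_{S\in\mathcal{S}}\si(S)$. Combining with the previous paragraph yields
\[
\inj=\si(T)=\bigcap_{N\in\mathcal{S}}\si(N)=\bigcap_{N\in\mathcal{FL}}\si(N)=\bigcap_{N\in\mathcal{C}}\si(N)=\bigcap_{N\in\mathcal{FG}}\si(N),
\]
which is the required chain.

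The only non-bookkeeping step is the identification $\si(T)=\bigcap_{S}\si(S)$; everything else is an immediate application of results proved earlier in the paper (Corollaries \ref{cor:injdomain} and \ref{prop:smallest}(1) and Propositions \ref{cor:ss=fl}, \ref{prop:C=FG}). I expect the mildly subtle point to be making clear that the semilocality of QF is what lets us replace an a priori possibly large intersection over $\mathcal{S}$ by a finite one, so the coordinate-wise extension is well-defined.
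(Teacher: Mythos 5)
Your proof is correct and follows essentially the same route as the paper, which simply cites Propositions \ref{cor:ss=fl} and \ref{prop:C=FG} (applicable to all modules via Corollary \ref{cor:injdomain}) together with Corollary \ref{prop:smallest}(1). You merely make explicit two steps the paper leaves implicit, namely $\mathcal{FG}=\mathcal{FL}$ over the (artinian) QF-ring and the identification $\si\bigl(\bigoplus_{S\in\mathcal{S}}S\bigr)=\bigcap_{S\in\mathcal{S}}\si(S)$ for the finite direct sum, both of which are sound.
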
 

\begin{proof} By Proposition \ref{cor:ss=fl},  Proposition \ref{prop:C=FG} and Corollary \ref{prop:smallest}. 
\end{proof}

\begin{corollary}\label{QF-proj} Let $R$ be a QF-ring. Then, 

   $$\proj=\bigcap_{N \in \mathcal{S}} \spp (N)=\bigcap_{N \in \mathcal{FL}} \spp (N)=\bigcap_{N \in \mathcal{C}} \spp (N)=\bigcap_{N \in \mathcal{FG}}\spp (N).$$

\end{corollary}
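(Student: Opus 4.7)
The plan is to mirror the proof of Corollary~\ref{QF-inj} step by step, replacing every subinjective ingredient by its subprojective counterpart. First I would chain the four intersections together using Corollary~\ref{cor:injdomain}(3) and Proposition~\ref{prop:sp.e.r}, and then I would identify the common value with $\proj$ by exhibiting a p-indigent module among the simples.

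First, because $R$ is QF, Corollary~\ref{cor:injdomain}(3) gives that every $M \in \Mod$ is sp.e.r. Consequently, the biconditionals in Proposition~\ref{prop:sp.e.r}(1) and (2) hold for every right $R$-module, and one concludes
\[
\bigcap_{N \in \mathcal{S}} \spp(N) \;=\; \bigcap_{N \in \mathcal{FL}} \spp(N)
\qquad \text{and} \qquad
\bigcap_{N \in \mathcal{C}} \spp(N) \;=\; \bigcap_{N \in \mathcal{FG}} \spp(N).
\]
Since a QF ring is right artinian, every finitely generated right module has finite length, so $\mathcal{FG} = \mathcal{FL}$, which forces all four intersections to coincide.

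Next I would identify this common class with $\proj$. The inclusion $\proj \subseteq \spp(N)$ is automatic for every $N$. For the reverse inclusion, using $\spp(M_1 \oplus M_2) = \spp(M_1) \cap \spp(M_2)$ (see \cite[Proposition~2.12]{HLM}), the intersection $\bigcap_{S \in \mathcal{S}} \spp(S)$ equals $\spp\bigl(\bigoplus_{S \in \mathcal{S}} S\bigr)$, where the direct sum ranges over a complete set of nonisomorphic simple right $R$-modules. Hence it suffices to show that $\bigoplus_{S \in \mathcal{S}} S$ is p-indigent whenever $R$ is QF.

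The main obstacle is precisely this last step: the subprojective analog of Corollary~\ref{prop:smallest}(1). I expect to invoke the subprojective counterpart of \cite[Proposition~32]{ABE} established in the literature (for instance via the p-indigent semisimple modules discussed in \cite{Durgun}); failing that, the natural direct route is to exploit QF-duality, since over a QF ring the Nakayama functor interchanges indecomposable injectives and indecomposable projectives, and therefore transports the already-established indigence of $\bigoplus_S S$ to its p-indigence. With this in hand the equality $\bigcap_{N \in \mathcal{S}} \spp(N) = \proj$ closes the chain and completes the proof.
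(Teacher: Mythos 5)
Your proposal is correct and follows essentially the same route as the paper: chain the four intersections via Proposition~\ref{prop:sp.e.r} (every module being sp.e.r.\ over a QF-ring by Corollary~\ref{cor:injdomain}) together with $\mathcal{FG}=\mathcal{FL}$, then reduce to the p-indigence of $\bigoplus_{S\in\mathcal{S}}S$. The one ingredient you left to the literature is exactly what the paper cites, namely \cite[Theorem 2.10]{BD}, so neither the QF-duality detour nor any further argument is needed.
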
 

\begin{proof} By \cite[Theorem 2.10]{BD}, the module $\oplus_{N \in \mathcal{S}}N$ is p-indigent. That is, $\proj=\bigcap_{N \in \mathcal{S}} \spp (N)$. Thus the proof follows by Proposition \ref{prop:sp.e.r}.
\end{proof}

Since $\inj = \proj$ over  QF-rings, from Corollaries \ref{QF-inj} and  \ref{QF-proj} we immediately deduce the following.

\begin{corollary}\label{cor:QFmain} Let $R$ be a QF-ring. Then for each $\mathcal{A},\, \mathcal{B} \in \{\mathcal{S},\, \mathcal{C},\,\mathcal{FL},\,\mathcal{FG}\}$

$$\inj=\bigcap_{A \in \mathcal{A}}\si(A)= \bigcap_{B \in \mathcal{B}}\spp(B)=\proj $$ 


\end{corollary}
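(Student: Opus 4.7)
The statement is effectively a packaging of the two preceding corollaries, so the plan is to read it off from them together with the classical identification of injectives and projectives over a QF-ring.

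First, I would recall that over a QF-ring $\inj=\proj$ (the reference to Lam is already invoked earlier in the paper), which settles the outermost equality in the chain. This is the only ingredient that genuinely uses the QF hypothesis in a way not already absorbed into the previous corollaries.

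Next, for the left block, I would run through the four possible choices of $\mathcal{A}\in\{\mathcal{S},\mathcal{C},\mathcal{FL},\mathcal{FG}\}$ and in each case quote Corollary \ref{QF-inj}, which asserts precisely that
\[
\inj=\bigcap_{N\in\mathcal{S}}\si(N)=\bigcap_{N\in\mathcal{FL}}\si(N)=\bigcap_{N\in\mathcal{C}}\si(N)=\bigcap_{N\in\mathcal{FG}}\si(N).
\]
Dually, for each $\mathcal{B}\in\{\mathcal{S},\mathcal{C},\mathcal{FL},\mathcal{FG}\}$ I would invoke Corollary \ref{QF-proj} to obtain
\[
\proj=\bigcap_{N\in\mathcal{S}}\spp(N)=\bigcap_{N\in\mathcal{FL}}\spp(N)=\bigcap_{N\in\mathcal{C}}\spp(N)=\bigcap_{N\in\mathcal{FG}}\spp(N).
\]
Splicing the two displays through the equality $\inj=\proj$ produces the desired chain.

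There is no real obstacle: all of the work has already been done in Propositions \ref{cor:ss=fl}, \ref{prop:C=FG} and their duals in Proposition \ref{prop:sp.e.r}, whose validity in the QF setting rests on Corollary \ref{cor:injdomain} (every module over a QF-ring is both si.e.r.\ and sp.e.r.), together with the existence of an indigent module (namely $\oplus_{S\in\mathcal{S}}S$ by \cite[Proposition 32]{ABE}) and of a p-indigent module (by \cite[Theorem 2.10]{BD}) to anchor the $\mathcal{S}$-intersections at $\inj$ and $\proj$ respectively. The proof therefore consists of nothing more than citing the two earlier corollaries and threading them together via $\inj=\proj$; in particular, no new diagram chase or induction is needed.
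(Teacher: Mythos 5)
Your proposal is correct and matches the paper's own argument exactly: the paper deduces the corollary immediately from Corollaries \ref{QF-inj} and \ref{QF-proj} together with the fact that $\inj=\proj$ over a QF-ring. Nothing further is needed.
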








\begin{proposition}\label{prop:sierindigent} Let $R$ be right fully si.e.r ring and $B \in \Mod$. Then 
\bigskip

$B$ is indigent  if and only if for every $A \leq B,$ $\si(A) \cap \si(B/A)=\inj.$ 

\end{proposition}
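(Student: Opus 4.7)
The proposition asks for a converse to the obvious containment $\inj\subseteq\si(A)\cap\si(B/A)$ together with the statement that one can characterize indigence of $B$ via these intersections over all submodules $A$. The plan is to treat the two implications separately; both are short because the full si.e.r hypothesis gives the extension closure of $\tsi(N)$ for \emph{every} module $N$.

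For the sufficiency ($\Leftarrow$), I would simply specialize the assumption to $A=0$. The zero module lies in $\si(N)$ for every $N$ (only the zero homomorphism needs extending, and the zero map from any overmodule works), so $\si(0)=\Mod$. Therefore $\si(0)\cap\si(B)=\si(B)$, and the hypothesis forces $\si(B)=\inj$, which is exactly the definition of $B$ being indigent. This direction does not even use the right fully si.e.r assumption.

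For the necessity ($\Rightarrow$), I would fix $A\leq B$ and pick $N\in\si(A)\cap\si(B/A)$; equivalently $A,B/A\in\tsi(N)$. Since $R$ is right fully si.e.r, the module $N$ is in particular si.e.r, so $\tsi(N)$ is closed under extensions. Applied to the canonical short exact sequence $0\to A\to B\to B/A\to 0$, this yields $B\in\tsi(N)$, i.e.\ $N\in\si(B)$. But $B$ is indigent, hence $\si(B)=\inj$ and $N$ is injective. Thus $\si(A)\cap\si(B/A)\subseteq\inj$, and the reverse inclusion is automatic.

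There is no real obstacle here: the substantive work was already done in proving that the right fully si.e.r hypothesis makes $\tsi(N)$ extension-closed for arbitrary $N$. The only subtlety worth noting in the write-up is the observation that $A=0$ alone recovers indigence of $B$, so the seemingly stronger family of intersection identities indexed by all submodules $A\leq B$ is equivalent to the single condition $\si(B)=\inj$ once the si.e.r hypothesis is in force.
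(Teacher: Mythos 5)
Your proof is correct and follows essentially the same route as the paper's: the forward direction uses the fully si.e.r hypothesis to get $\si(A)\cap\si(B/A)\subseteq\si(B)=\inj$ (together with the automatic reverse inclusion), and the converse specializes to $A=0$ with $\si(0)=\Mod$. Your write-up is in fact slightly cleaner, since the paper's printed proof inadvertently invokes a ``QF-ring assumption'' where it means the fully si.e.r hypothesis.
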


\begin{proof} Suppose $B$ is indigent. Then $\si(B)=\inj.$ By the QF-ring assumption, every module is si.e.r. Thus $\si(A) \cap \si(B/A)\subseteq \si (B)=\inj.$ Hence $\si(A) \cap \si(B/A)=\inj.$ 

Conversely, assume that $\si(A) \cap \si(B/A)=\inj$ for each $A \leq B.$ For $A=0,$ we have $\si(0)=\Mod$ and so $\si(B)=\inj.$ Therefore, $B$ is indigent.
\end{proof}

\begin{proposition} Let $R$ be right fully sp.e.r ring and $B\in \Mod$. Then 

\bigskip
\item $B$ is p-indigent  if and only if for every $A \leq B,$ $\spp(A) \cap \spp(B/A)=\proj.$ 

\end{proposition}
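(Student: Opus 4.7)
The plan is to mirror the proof of Proposition \ref{prop:sierindigent} verbatim, replacing subinjectivity by subprojectivity throughout. The forward implication will use the fully sp.e.r. hypothesis on $R$ to push the intersection into $\spp(B)$, while the converse will be extracted from the special case $A = 0$.

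For the necessity, I would assume $B$ is p-indigent, so $\spp(B) = \proj$. Fix any $A \leq B$ and any module $M \in \spp(A) \cap \spp(B/A)$. This precisely says $A,\, B/A \in \tspp(M)$. Since $R$ is right fully sp.e.r., $M$ is sp.e.r., meaning $\tspp(M)$ is closed under extensions; applying this to the short exact sequence $0 \to A \to B \to B/A \to 0$ yields $B \in \tspp(M)$, i.e., $M \in \spp(B) = \proj$. Hence $\spp(A) \cap \spp(B/A) \subseteq \proj$, and the reverse inclusion is automatic because every projective module lies in $\spp(N)$ for every $N$. Equality follows.

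For the sufficiency, I would specialize the assumption to $A = 0$. Then $\spp(0) = \Mod$, since every homomorphism from a module into the zero module is zero and therefore trivially factors through any epimorphism onto $0$. Also, $B/A = B$. Therefore the hypothesis gives $\Mod \cap \spp(B) = \spp(B) = \proj$, which is exactly the p-indigence of $B$.

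There is no serious obstacle: the argument is a direct dualization of Proposition \ref{prop:sierindigent} and invokes only the definition of sp.e.r., the trivial identity $\spp(0) = \Mod$, and the fact that projectives always lie in every subprojectivity domain. The only mild care required is to note that $M \in \spp(A) \cap \spp(B/A)$ is the exact translation of $A, B/A \in \tspp(M)$, which makes the extension-closure of $\tspp(M)$ directly applicable.
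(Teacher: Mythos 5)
Your proof is correct and is essentially the paper's own argument: the paper's proof consists of the single line ``Dual to proof of Proposition~\ref{prop:sierindigent}'', and your dualization (the fully sp.e.r.\ hypothesis gives $\spp(A)\cap\spp(B/A)\subseteq\spp(B)=\proj$ for necessity, the case $A=0$ gives sufficiency, and $\proj\subseteq\spp(N)$ for all $N$ supplies the reverse inclusion) is exactly that. One trivial slip worth fixing: $\spp(0)=\Mod$ holds because the only homomorphism \emph{out of} the zero module is the zero map, which lifts through any epimorphism (so $0$ is $N$-subprojective for every $N$); the reason you state concerns maps \emph{into} $0$ and instead justifies the different (also true) fact that $0\in\spp(M)$ for every $M$.
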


\begin{proof} Dual to proof of Proposition \ref{prop:sierindigent}. 
\end{proof}

Over a QF-ring, indigent and p-indigent modules coincide. Hence the following is clear.

\begin{corollary}\label{QF:cor} Let $R$ be a QF-ring and $B\in \Mod$. Then, 
\bigskip

\item $B$ is indigent (=p-indigent)  if and only if for every $A \leq B,$ $\si(A) \cap \si(B/A)=\inj (=\proj).$

\end{corollary}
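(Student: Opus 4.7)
The plan is to deduce this as an immediate corollary of Proposition \ref{prop:sierindigent} together with two structural facts about QF-rings: namely, that every right $R$-module is si.e.r.\ (Corollary \ref{cor:injdomain}(1)), and that $\inj=\proj$, which in turn forces indigent and p-indigent modules to coincide. The goal is just to package these facts together, so no new construction is required.

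First I would observe that since $R$ is QF, it is right fully si.e.r.\ by Corollary \ref{cor:injdomain}(1). Thus Proposition \ref{prop:sierindigent} applies to every $B\in\Mod$, yielding the equivalence
\[
B \text{ is indigent} \iff \si(A)\cap \si(B/A)=\inj \text{ for every } A\leq B.
\]
Next I would invoke the standard characterization of QF-rings by $\inj=\proj$, which gives the parenthetical identifications $\inj=\proj$ on the right-hand side; and since the classes coincide, a module is indigent precisely when it is p-indigent. Combining these identifications with the equivalence above gives the stated corollary.

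The only mildly subtle point is ensuring that the symmetric sp.e.r.\ version (via the dual proposition and Corollary \ref{cor:injdomain}(3)) matches up correctly with the si.e.r.\ version; but since both sides of the equivalence collapse under $\inj=\proj$ and indigent $=$ p-indigent, the two statements are literally the same assertion over a QF-ring. There is no genuine obstacle: the work has all been done in the earlier propositions, and this corollary is a one-line deduction.
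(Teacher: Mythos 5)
Your proposal is correct and follows the paper's own route exactly: the paper likewise derives this corollary from Proposition \ref{prop:sierindigent} (applicable because Corollary \ref{cor:injdomain} makes every module over a QF-ring si.e.r.) together with the identification $\inj=\proj$, which collapses indigent and p-indigent. Nothing further is needed.
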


Recall that, a module $M$ is called $FG$-projective (resp. $C$-projective) in case for any epimorphism $f: N \to M$ and any homomorphism $g: F \to M$ with an $F$ finitely generated (resp. cyclic), there is an $h: F \to N$ such that $g=fh.$   By \cite[Lemma 1]{DKS}, $M$ is $FG$-projective (resp. $C$-projective) if and only if $M \in \spp(N)$ for each $N \in \mathcal{FG}$ (resp. $N \in \mathcal{C}$).

In \cite{DO},  a module $M$ is called $FG$-injective, if for every finitely generated module $K,$ every homomorphism $f: M \to K$ factors through an injective module. $M$ is $FG$-injective if and only if $M \in \si (N)$ for every finitely generated module $N$ (see, \cite[Proposition 2.3.]{DO}). To be consistent with the terminology, let us call $M$ $C$-injective in case any homomorphism $f: M \to C$ with a cyclic $C$ factors through an injective module. Similar arguments as in  \cite[Proposition 2.3.]{DO}  are applied to show that $M$ is $C$-injective if and only if  $M \in \si (N)$ for each $N \in \mathcal{C}.$

\begin{proposition}\label{FGC} The following hold for a module $M.$
\begin{enumerate}
\item If $M$ is si.e.r., then $M$ is $FG$-injective if and only if $M$ is $C$-injective.

\item If $M$ is sp.e.r., then $M$ is $FG$-projective if and only if $M$ is $C$-projective.
\end{enumerate}
\end{proposition}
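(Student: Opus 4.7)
The plan is to observe that this proposition is essentially a direct translation of Propositions \ref{prop:C=FG} and \ref{prop:sp.e.r} into the language of $FG$-injectivity/projectivity and $C$-injectivity/projectivity, so the proof reduces to unwinding definitions and citing the prior results.

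First I would recall the equivalent characterizations already discussed just before the statement: by \cite[Proposition 2.3]{DO} (and the analogous remark made by the author for the cyclic case), $M$ is $FG$-injective if and only if $M \in \bigcap_{N \in \mathcal{FG}} \si(N)$, and $M$ is $C$-injective if and only if $M \in \bigcap_{N \in \mathcal{C}} \si(N)$. Similarly, by \cite[Lemma 1]{DKS}, $M$ is $FG$-projective if and only if $M \in \bigcap_{N \in \mathcal{FG}} \spp(N)$, and $M$ is $C$-projective if and only if $M \in \bigcap_{N \in \mathcal{C}} \spp(N)$.

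For (1), assuming $M$ is si.e.r., Proposition \ref{prop:C=FG} directly yields the equivalence
\[
M \in \bigcap_{N \in \mathcal{C}} \si(N) \iff M \in \bigcap_{N \in \mathcal{FG}} \si(N),
\]
which, by the characterizations above, is exactly the equivalence of $C$-injectivity and $FG$-injectivity. For (2), the symmetric argument uses Proposition \ref{prop:sp.e.r}(2) under the assumption that $M$ is sp.e.r.

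There is no real obstacle here; the content of the statement has already been carried out in the earlier inductive proofs (on the number of generators) in Propositions \ref{prop:C=FG} and \ref{prop:sp.e.r}. The only thing to be careful about is to write the proof as a citation of those two results rather than reproducing the induction, since the two formulations are literally the same modulo the definitions of $FG$/$C$-injective and $FG$/$C$-projective modules recalled immediately before the statement.
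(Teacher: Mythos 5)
Your proposal is correct and matches the paper's proof exactly: the paper also deduces (1) from Proposition \ref{prop:C=FG} and (2) from Proposition \ref{prop:sp.e.r}, after recalling that $FG$-injectivity (resp.\ $C$-injectivity, $FG$-projectivity, $C$-projectivity) is equivalent to membership in $\bigcap_{N \in \mathcal{FG}} \si(N)$ (resp.\ the corresponding intersections). Nothing further is needed.
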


\begin{proof} $(1)$  and $(2)$ follows by Proposition \ref{prop:C=FG} and Proposition \ref{prop:sp.e.r}, respectively.
\end{proof}

Now, the following directly follows by Corollary \ref{cor:QFmain} and Proposition \ref{FGC}.

\begin{corollary} Over a QF-ring, the notions of being $FG$-injective, $C$-injective, $FG$-projective, $C$-projective and injective coincide.
\end{corollary}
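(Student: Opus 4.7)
The plan is to reduce each of the four finite-generation-based notions to an intersection of subinjectivity or subprojectivity domains, and then apply Corollary \ref{cor:QFmain} to collapse everything to $\inj=\proj$.

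First, I would recall the characterizations stated in the paragraph preceding Proposition \ref{FGC}. By \cite[Proposition 2.3]{DO} and the remark that follows, a module $M$ is $FG$-injective (resp.\ $C$-injective) if and only if $M\in\si(N)$ for every $N\in\mathcal{FG}$ (resp.\ $N\in\mathcal{C}$); equivalently,
\[
M\in \bigcap_{N\in\mathcal{FG}}\si(N) \quad\text{resp.}\quad M\in \bigcap_{N\in\mathcal{C}}\si(N).
\]
By \cite[Lemma 1]{DKS}, $M$ is $FG$-projective (resp.\ $C$-projective) if and only if
\[
M\in \bigcap_{N\in\mathcal{FG}}\spp(N) \quad\text{resp.}\quad M\in \bigcap_{N\in\mathcal{C}}\spp(N).
\]
So each of the four notions is identified with membership in one of these four intersections.

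Next, I would invoke Corollary \ref{cor:QFmain}, applied with $\mathcal{A},\mathcal{B}$ ranging over $\mathcal{C}$ and $\mathcal{FG}$: over a QF-ring,
\[
\inj \;=\; \bigcap_{N\in\mathcal{FG}}\si(N) \;=\; \bigcap_{N\in\mathcal{C}}\si(N) \;=\; \bigcap_{N\in\mathcal{FG}}\spp(N) \;=\; \bigcap_{N\in\mathcal{C}}\spp(N) \;=\; \proj.
\]
Since a QF-ring satisfies $\inj=\proj$, combining the last display with the translations above shows that $M$ is $FG$-injective iff $C$-injective iff $FG$-projective iff $C$-projective iff injective (iff projective), as required.

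No serious obstacle arises here: the entire technical content has already been packaged into Corollary \ref{cor:QFmain} (which in turn rests on Propositions \ref{cor:ss=fl}, \ref{prop:C=FG}, \ref{prop:sp.e.r}, together with the fact that every module over a QF-ring is both si.e.r.\ and sp.e.r.\ by Corollary \ref{cor:injdomain}). The present corollary is therefore a direct dictionary between the language of relative (sub)injectivity/projectivity with respect to $\mathcal{C}$ or $\mathcal{FG}$ and the absolute notion of injectivity/projectivity available on a QF-ring.
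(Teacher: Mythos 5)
Your proposal is correct and follows essentially the same route as the paper, which derives the corollary directly from Corollary \ref{cor:QFmain} together with the dictionary identifying $FG$-injective, $C$-injective, $FG$-projective and $C$-projective with membership in the corresponding intersections of subinjectivity/subprojectivity domains (the content of Proposition \ref{FGC} and the remarks preceding it). Nothing is missing.
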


\section{Rings whose modules of finite length are homomorphic image of injective modules}

In this section, we explore a class of rings properly lies between QF-rings and dual Kasch rings. Namely, those rings  for which right modules of finite (composition) length are homomorphic images of injective modules. To this end, we consider the following property for a ring $R:$


\bigskip

$(Q):$ Every right R-module of finite length is a homomorphic image of an injective module. 

\bigskip

Following \cite{meric},   $R$ is said to satisfy $(P)$ if every proper cyclic module is a homomorphic image of an injective module.

We have the following implications among the aforementioned classes of rings and non of which is reversible.

\[
\begin{tikzcd}
& \text{Rings with  (P)} \arrow[Rightarrow]{dr} & \\
\text{QF-rings} \arrow[Rightarrow]{ur} \arrow[Rightarrow]{dr} & & \text{Dual Kasch rings} \\
& \text{Rings with  (Q)} \arrow[Rightarrow]{ur} &
\end{tikzcd}
\]




 We begin with the following characterization of the rings satisfying $(Q)$ in terms of subinjectivity domain of modules with finite length.

\begin{proposition} The following statements are equivalent for a ring $R.$

\begin{enumerate}
\item $R$ satisfies $(Q)$.

\bigskip

\item $R \in \bigcap _{A \in  \mathcal{FL} } \si (A).$

\bigskip

\item $\proj \subseteq \bigcap _{A \in \mathcal{FL}  }\si (A).$

\end{enumerate}
\end{proposition}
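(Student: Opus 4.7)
The plan is to prove the cycle $(1) \Rightarrow (3) \Rightarrow (2) \Rightarrow (1)$. Throughout I will rely on the immediate reformulation of the definition: for any modules $X$ and $B$, the condition $X \in \si(B)$ means that $B$ is $X$-subinjective, which is equivalent to saying that every homomorphism $X \to B$ extends along the inclusion $X \hookrightarrow E(X)$ into the injective hull of $X$ (equivalently, factors through some injective module). This reduces everything to a question about lifting/extending maps.

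For $(1) \Rightarrow (3)$, I would fix a projective module $P$, a module $A \in \mathcal{FL}$, an extension $P \leq C$, and a homomorphism $\phi\colon P \to A$. By $(Q)$ there exist an injective $I$ and an epimorphism $\pi\colon I \to A$. Projectivity of $P$ lifts $\phi$ to $\psi\colon P \to I$ with $\pi\psi = \phi$; injectivity of $I$ extends $\psi$ to some $\tilde\psi\colon C \to I$; and then $\pi\tilde\psi\colon C \to A$ extends $\phi$. This shows $P \in \si(A)$, hence $\proj \subseteq \bigcap_{A \in \mathcal{FL}} \si(A)$. The implication $(3) \Rightarrow (2)$ is immediate since $R_R \in \proj$.

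For the reverse implication $(2) \Rightarrow (1)$, fix $A \in \mathcal{FL}$. Since $A$ is finitely generated, pick an epimorphism $\rho = (\phi_1, \ldots, \phi_n)\colon R^n \to A$. Using $R \in \si(A)$, each component $\phi_i\colon R \to A$ extends along $R \hookrightarrow E(R)$ to some $\tilde\phi_i\colon E(R) \to A$. Assemble these into $\tilde\rho\colon E(R)^n \to A$, $(x_1,\ldots,x_n) \mapsto \sum_i \tilde\phi_i(x_i)$. Since $\tilde\rho$ restricts to $\rho$ on $R^n$, the map $\tilde\rho$ is surjective, and $E(R)^n$ is injective as a finite direct sum of injective modules. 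Therefore $A$ is a homomorphic image of an injective module, and $R$ satisfies $(Q)$.

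I do not expect any substantive obstacle: each step reduces to a direct application of the defining lifting/extension property combined with the fact that finite direct sums of injective modules are injective. The only conceptual point worth highlighting is the recognition that $(Q)$ and membership of $R$ in $\bigcap_{A \in \mathcal{FL}} \si(A)$ are essentially dual phrasings of the same factoring property, with $(3)$ providing a convenient intermediate step that makes projectivity (rather than merely the regular module) the relevant input.
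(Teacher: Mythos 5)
Your proposal is correct and follows essentially the same route as the paper: the same lift-then-extend argument for $(1)\Rightarrow(3)$, the same trivial step $(3)\Rightarrow(2)$, and the same extension of an epimorphism $R^n\to A$ over an injective hull for $(2)\Rightarrow(1)$. The only cosmetic difference is that in $(2)\Rightarrow(1)$ you extend the components $R\to A$ individually and assemble a map $E(R)^n\to A$, whereas the paper first invokes closure of $\si(A)$ under finite direct sums to get $R^n\in\si(A)$ and then extends to $E(R^n)$; these are the same computation.
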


\begin{proof} $(1) \Rightarrow (3)$ Suppose $R$ is a ring satifying (Q) and let $A \in \mathcal{FL}.$ Then there is an injective right $R$-module $E$ and an epimorphism $g: E \to A.$ Let $P \in \proj$ and $f: P \to A$ be a homomorphism. Suppose $P\leq K.$ Then, by projectivity of $P$, there is a homomorphism $h:P \to E$ such that $gh=f.$ Now, by injectivity of $E$, there is a homomorphism $t: K \to E$ such that $ti=h,$ where $i: P \to K$ is the inclusion map. Thus $f=gh=gti,$ that is, the map $gh$ extends $f.$ Hence $P \in \si (A),$ and so $(3)$ follows.

$(3) \Rightarrow (2)$ is clear.

$(2) \Rightarrow (1)$ Let $A\in \mathcal{FL}.$  As $A$ is finitely generated, there is an epimorphism $f : R^k \to A$ for some $k \in \Z^+ .$ Since $R \in \si(A)$ and  $\si (A)$ is closed under finite direct sums, $R^k \in  \si(A).$ Set $E=E(R^k).$ Then there is a homomorphism $g: E \to A$ such that $f=gi,$ where $i: R^k \to E$ is the inclusion map. Now $g$ is an epimorphism, because $f$ is so. Hence $A$ is an epimorphic image of $E,$ and  $(1)$ follows.
\end{proof}

\begin{lemma}\label{lem:ArtinQF}  A right artinian ring $R$  satisfies $(Q)$ if and only if $R$ is QF. 
\end{lemma}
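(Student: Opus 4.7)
The plan is to prove the two implications separately, and the crucial observation is that, for a right artinian ring $R$, the module $R_R$ itself has finite length, which allows the hypothesis $(Q)$ to be applied to $R$ itself.

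For the easy direction, suppose $R$ is QF. Then $R_R$ is injective. Any module $M \in \mathcal{FL}$ is in particular finitely generated, so there is an epimorphism $R^n \twoheadrightarrow M$ for some $n \in \mathbb{Z}^+$. A finite direct sum of injective modules is injective, so $R^n$ is injective. Hence $M$ is a homomorphic image of an injective module, and $(Q)$ holds.

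For the main direction, suppose $R$ is right artinian and satisfies $(Q)$. By the Hopkins--Levitzki theorem, $R$ is also right noetherian, so the right $R$-module $R_R$ is both noetherian and artinian, hence has finite composition length. Therefore $R_R \in \mathcal{FL}$. Applying $(Q)$ to $R_R$ itself, there exist an injective right $R$-module $E$ and an epimorphism $g\colon E \twoheadrightarrow R$. Since $R_R$ is projective, $g$ splits, so $R_R$ is (isomorphic to) a direct summand of $E$. Direct summands of injective modules are injective, so $R_R$ is injective. Combined with right noetherianness, this gives that $R$ is QF.

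There is no real obstacle here; the argument is short once one notices the key point that $R \in \mathcal{FL}$ whenever $R$ is right artinian, so the hypothesis $(Q)$ forces $R_R$ to be a quotient of an injective module, which automatically splits off by projectivity of $R_R$. The main step to flag, if anything, is the invocation of Hopkins--Levitzki to ensure that the right artinian assumption yields finite length of $R_R$ rather than merely artinianness.
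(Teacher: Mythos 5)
Your proof is correct and follows essentially the same route as the paper: apply $(Q)$ to $R_R$ itself (which has finite length since a right artinian ring is right noetherian), and use projectivity of $R_R$ to split the resulting epimorphism from an injective module, giving self-injectivity. You merely make explicit two points the paper leaves implicit, namely the Hopkins--Levitzki step and the easy direction.
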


\begin{proof} Sufficiency is clear. Suppose $R$ satisfies $(Q)$. Since $R$ is right artinian, $R$ has finite length. Thus $R$ is an epimorphic image of an injective right $R$-module, say $E$, by the assumption. Then $R$ is a direct summand of $E,$ and so $R$ is right self-injective.  Hence $R$ is a QF ring.
\end{proof}

\begin{lemma}\label{lem:VringSDK}  Any right $V$-ring  satisfies (Q).
\end{lemma}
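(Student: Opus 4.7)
The plan is to exploit the standard characterization of right $V$-rings in terms of the Jacobson radical: $R$ is a right $V$-ring if and only if every simple right $R$-module is injective, which is further equivalent to $J(M)=0$ for every right $R$-module $M$ (where $J(M)$ denotes the intersection of the maximal submodules of $M$). Once this is on the table, the property (Q) will come essentially for free, because finite-length modules over a $V$-ring turn out to be injective themselves.

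The key steps I would carry out, in order, are the following. First, fix a right $R$-module $A$ of finite composition length, say $n$. Second, since $R$ is a right $V$-ring, $J(A)=0$. Third, argue that a finite-length module with zero radical is semisimple: because $A$ is artinian, the intersection defining $J(A)$ reduces to a finite intersection $M_1\cap\cdots\cap M_k=0$ of maximal submodules, and this yields an embedding $A\hookrightarrow \bigoplus_{i=1}^{k} A/M_i$ into a semisimple module, so $A$ is itself semisimple (alternatively, one can induct on length, splitting off a simple submodule via a maximal submodule avoiding it). Fourth, write $A\cong S_1\oplus\cdots\oplus S_n$ with each $S_i$ simple; since $R$ is a $V$-ring, each $S_i$ is injective, and a finite direct sum of injectives is injective, so $A$ is injective. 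Fifth, conclude that $A$ is a homomorphic image of an injective module, namely of itself via the identity, so (Q) holds.

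There is really no serious obstacle here; the only care needed is in step three, to make sure that the standard fact ``finite length $+$ zero Jacobson radical $\Rightarrow$ semisimple'' is spelled out, since the module is not assumed a priori to be semisimple. Everything else is a one-line citation of the $V$-ring characterization together with the closure of injectives under finite direct sums. If desired, the conclusion can be stated slightly more strongly: over a right $V$-ring, every finite-length right module is in fact injective, which is evidently stronger than being a homomorphic image of an injective, so (Q) is automatic.
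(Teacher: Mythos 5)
Your proof is correct, and it reaches the same intermediate conclusion as the paper --- over a right $V$-ring every module of finite length is semisimple, hence injective, so $(Q)$ holds trivially --- but by a different mechanism. The paper works with the socle: $\Soc(A)$ is an essential, finitely generated semisimple submodule, hence a finite direct sum of injective simples, hence injective, hence a direct summand; essentiality then forces $\Soc(A)=A$. You instead invoke Villamayor's characterization of right $V$-rings via $J(M)=0$ for all $M$, and use the artinian condition to reduce the intersection of maximal submodules to a finite one, embedding $A$ into a finite direct sum of simples. Both arguments are sound and of comparable length; the paper's socle argument avoids citing the radical characterization of $V$-rings (it needs only the definition that simple modules are injective), while yours avoids the essentiality argument at the cost of the extra standard fact that finite length plus zero radical implies semisimple, which you correctly flag as the one step requiring care. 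Either way the stronger statement you note --- finite-length modules over a right $V$-ring are injective --- is exactly what the paper's proof establishes as well.
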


\begin{proof} Let $R$ be aright $V$-ring and $A$ be a right $R$-module of finite length. Then $soc(A)$ is finitely generated and an essential submodule of $A.$ Thus $soc(A)$ is injective by the right $V$-ring assumption. Whence $soc(A)$ is a direct summand of $A$ and essentiality of $soc(A)$ in $A,$ implies $soc(A)=A.$  Hence $A$ is injective, and so $R$ is satisfies (Q).
\end{proof}

\begin{lemma}  For a right hereditary ring $R$, the following statements are equivalent. 

\begin{enumerate}
\item[(1)] $R$ satisfies $(Q)$.

\bigskip

\item[(2)] $R$ is right dual Kasch.

\bigskip

\item[(3)] $R$ is right $V$-ring.

\end{enumerate}
\end{lemma}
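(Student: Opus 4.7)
The plan is to establish the cycle $(1) \Rightarrow (2) \Rightarrow (3) \Rightarrow (1)$, exploiting the fact that over a right hereditary ring, quotients of injective modules are injective.

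For $(1) \Rightarrow (2)$, I would simply observe that every simple module has composition length one, so it belongs to $\mathcal{FL}$. Applying $(Q)$ immediately produces, for each simple right $R$-module $S$, an injective module $E$ together with an epimorphism $E \twoheadrightarrow S$, which is precisely the right dual Kasch condition.

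For $(3) \Rightarrow (1)$, no work is needed: this is exactly Lemma \ref{lem:VringSDK}, which was proved without any hereditary hypothesis.

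The essential implication is $(2) \Rightarrow (3)$. Here I would take an arbitrary simple right $R$-module $S$ and invoke the dual Kasch assumption to obtain an injective module $E$ and an epimorphism $\pi : E \to S$. Setting $K = \ker(\pi)$, we have $S \cong E/K$. Now the right hereditary hypothesis enters through its standard homological characterization: $R$ is right hereditary if and only if every homomorphic image of an injective right $R$-module is injective. Applying this to $\pi$ shows that $E/K$, and hence $S$, is injective. Since $S$ was an arbitrary simple module, $R$ is a right $V$-ring.

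The only step one might worry about is $(2) \Rightarrow (3)$, but the ``obstacle'' is really just the appeal to the well-known characterization of right hereditary rings via injectivity of quotients of injectives; once that is cited, the implication is one line. Thus the proof is essentially a diagram-free three-line argument for each implication, with the hereditary hypothesis being used in exactly one place.
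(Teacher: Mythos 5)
Your proposal is correct and follows essentially the same route as the paper: the same cycle $(1)\Rightarrow(2)\Rightarrow(3)\Rightarrow(1)$, with $(1)\Rightarrow(2)$ immediate since $\mathcal{S}\subseteq\mathcal{FL}$ and $(3)\Rightarrow(1)$ by the lemma that right $V$-rings satisfy $(Q)$. The only difference is that where the paper disposes of $(2)\Rightarrow(3)$ by citing \cite{BL}*{Proposition 2.9}, you supply the one-line argument directly (quotients of injectives are injective over a right hereditary ring, so each simple module, being an image of an injective, is itself injective), which is exactly the content of that citation.
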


\begin{proof} 
$(1) \Rightarrow (2)$ is clear.  $(2) \Rightarrow (3)$ by \cite[Proposition 2.9]{BL}.   $(3) \Rightarrow (1)$ by Lemma \ref{lem:VringSDK}.

\end{proof}








\begin{remark} Every commutative artinian ring is a Kasch ring and hence also a dual Kasch ring by \cite[Corollary 3.2]{ABE}. An artinian ring satisfies  $(Q)$ if and only if it is QF by Lemma \ref{lem:ArtinQF}. Therefore, any commutative artinian ring that is not QF provides an example of a dual Kasch ring that does not satisfy $(Q)$ (see, Example \ref{example}). 
\end{remark}

In the following result, we provide a condition under which dual Kasch rings satisfy  $(Q)$.

\begin{proposition}\label{prop:E(R) projective} Let $R$ be a ring with projective $E(R)$. Then $R$ is right dual Kasch if and only if  $R$ satisfies $(Q)$.
\end{proposition}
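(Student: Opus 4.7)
The forward implication is immediate: every simple module lies in $\mathcal{FL}$, so if $R$ satisfies $(Q)$ then in particular every simple right $R$-module is a homomorphic image of an injective, i.e., $R$ is right dual Kasch. The substance is in the converse, and my plan is to deduce it by showing that, under the projectivity hypothesis on $E(R)$, the module $R_R$ is si.e.r., so that the characterization of $(Q)$ in terms of $\si(-)$ can be fed by the (a priori weaker) dual Kasch hypothesis, which controls only simples.

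More concretely, the argument will unfold in three steps. First, since $R$ embeds in its injective hull $E(R)$, and by hypothesis $E(R)$ is both injective and projective, Proposition \ref{lem:injdm} applies to give that $R$ is si.e.r. Second, using the characterization already recorded in the proof of the equivalence for $(Q)$ (the same hom-set/essential-extension argument as in the first proposition of this section), the dual Kasch condition on $R$ is equivalent to
\[
R \in \bigcap_{S \in \mathcal{S}} \si(S);
\]
indeed, if every simple $S$ is a quotient of some injective $E$, then any homomorphism $R \to S$ lifts along $E \to S$ by projectivity of a finitely generated free cover and then extends by injectivity of $E$, and conversely a presentation $R^k \twoheadrightarrow S$ extends to $E(R^k) \to S$, which is surjective. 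Third, because $R$ is si.e.r., Proposition \ref{cor:ss=fl} upgrades this to
\[
R \in \bigcap_{A \in \mathcal{FL}} \si(A),
\]
and by the opening characterization of this section, this is exactly the statement that $R$ satisfies $(Q)$.

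The whole proof is really a three-line chain of earlier results, so there is no serious obstacle; the only point worth being careful about is making sure the dual Kasch hypothesis is being translated correctly into the subinjectivity statement $R \in \bigcap_{S} \si(S)$ (this is the same kind of lift-and-extend argument as in the proof that $(Q)$ is equivalent to $R \in \bigcap_{A \in \mathcal{FL}} \si(A)$, simply restricted to $A = S$ simple). Once that translation is in place, the projectivity of $E(R)$ is used exactly once, via Proposition \ref{lem:injdm}, to license passing from simples to modules of finite length through the si.e.r. property.
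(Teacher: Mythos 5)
Your proof is correct and follows essentially the same route as the paper: both arguments use the projectivity of $E(R)$ via Proposition \ref{lem:injdm} to make $R$ si.e.r., translate the dual Kasch hypothesis into $R \in \si(S)$ for all simple $S$, and then pass to all finite-length modules by induction on composition length before converting $R \in \si(B)$ into the surjectivity statement of $(Q)$. The only difference is organizational: you cite Proposition \ref{cor:ss=fl} and the opening characterization of $(Q)$ in Section 3, whereas the paper re-derives both of those steps inline within the proof.
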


\begin{proof} To prove the necessity, assume that $R$ is right dual Kasch. Then $R \in \si(S)$ for each simple right $R$-module $S.$ Let $B$ be a nonzero right $R$-module of length $n.$ First we shall prove that $R \in \si (B)$  by induction on $n.$ If $n=1,$ then $B$ is simple, and so the claim follows by the assumption. Assume that $n>1$, and every nonzero module of composition length less than $n$ is a homomorphic image of an injective module. Let $A$ be a nonzero submodule of $B$ that have composition length $n-1.$ Then $B/A$ has composition length 1, and so $R \in \si(A) \cap \si(B/A)$ by the induction hypothesis. Thus $R \in \si (B)$ by  hypothesis and Proposition \ref{lem:injdm} . Since $\si(B)$ is closed under finite direct sums, $R^k \in \si (B)$ for each $k \in \Z ^+ .$ Let $f:R^m \to B$ be an epimorphism. Then, as $R^m \in \si (B),$ there is a homomorphism $g: E(R)^m \to B$ extending $f.$ Since $f$ is an epimorphism, $g$ is an epimorphism. Therefore, $B$ is an epimorphic image of the injective module $E(R)^m.$ This proves the necessity. Sufficiency is clear.
\end{proof}

Note that,  a right $R$-module $M$ has finite length if and only if $M$ is both artinian and noetherian. Since being artinian, noetherian and being injective for modules are Morita invariant properties, we have the following.

\begin{proposition} Property $(Q)$ for rings is a Morita invariant property.
\end{proposition}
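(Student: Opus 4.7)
The plan is to use the standard toolkit of Morita theory: if $R$ and $S$ are Morita equivalent rings, then any category equivalence $F : \Mod \to \Mods$ (with quasi-inverse $G$) is additive, exact, preserves and reflects epimorphisms, preserves injective modules, and preserves composition length. The author already notes that being artinian, being noetherian, and being injective are Morita invariant, so a module has finite length in $\Mod$ if and only if its image in $\Mods$ has finite length.

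With this in hand, I would argue by symmetry: it suffices to show that if $R$ satisfies $(Q)$ then $S$ does. Let $N$ be a right $S$-module of finite length. Then $G(N)$ is a right $R$-module of finite length by Morita invariance of length. By property $(Q)$ for $R$, there exists an injective right $R$-module $E$ and an epimorphism $\pi : E \twoheadrightarrow G(N)$. Applying $F$ produces $F(\pi) : F(E) \to F(G(N))$, which is an epimorphism because $F$ is exact (and hence preserves surjectivity), and $F(G(N)) \cong N$ via the unit of the equivalence. Moreover $F(E)$ is an injective right $S$-module, again by Morita invariance. Composing $F(\pi)$ with the isomorphism $F(G(N)) \cong N$ exhibits $N$ as a homomorphic image of the injective module $F(E)$, so $S$ satisfies $(Q)$.

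There is no real obstacle here beyond bookkeeping: the heart of the argument is that $(Q)$ is phrased purely in terms of finite length, injectivity, and the existence of an epimorphism, all of which are categorical properties stable under any additive equivalence. The only place one might want to be careful is in choosing a clean reference (or an inline justification) for the Morita invariance of injectivity and of composition length; both are classical and can be cited from any standard text on Morita theory. Given the brevity of the statement and the lack of any subtle ingredient, I would write the proof in a single short paragraph of the form described above, rather than isolating lemmas.
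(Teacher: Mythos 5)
Your proposal is correct and follows essentially the same argument as the paper: transport a finite-length module across the equivalence, apply $(Q)$ on the other side, and carry the resulting epimorphism from an injective back, using Morita invariance of finite length, injectivity, and epimorphisms (the paper cites \cite[Propositions 21.3, 21.6, 21.8]{AF} for these facts). The only difference is the direction in which you run the equivalence, which is immaterial by symmetry.
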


\begin{proof}
Let $R$ and $S$ be rings, $F:\Mod \rightarrow \Mods$ a Morita equivalence with inverse $G:\Mods\rightarrow \Mod$ such that $S$ is a ring satisfying $(Q)$. Let $A$   be a right $R$-module of finite length. Then $F(A)$  has finite length by \cite[Proposition 21.8(4)]{AF}.  Since $S$ satisy $(Q)$, there is an injective right $S$-module $E$ and an epimorphism $f: E \to F(A)$ in $\Mods.$ Then $G(E)$ is an injective right $R$-module by \cite[Proposition 21.6 (2)]{AF} and $G(f): G(E) \to G(F(A)) \simeq A$ is an epimorphism by \cite[Proposition 21.3]{AF}. Thus $A$ is an epimorphic image of the injective right $R$-module $G(E)$. This shows that $R$ a ring satisfying $(Q)$.  
\end{proof}

Since factor rings for QF-rings need not be QF, the rings that satisfy $(Q)$  need not be closed under factor rings.  

\begin{example}  Let $R=k[x,y]/(x^2,\,y^2)$, where $k$ is a field. Then, by \cite[Exercise 15.5]{Lammodules} $R$ is a commutative 4-dimensional Frobenius $k$-algebra. Thus $R$ is a ring satisfying $(Q)$. On the other hand, for the ideal $I=(x^2, y^2)/(x^2, xy, y^2)$, the factor ring $R/I \simeq k[x,y]/(x^2,\, xy, \,y^2)$ is artinian but not self-injective. Therefore $R/I$ is not QF, and so $R/I$ does not satisfy $(Q)$ by Lemma \ref{lem:ArtinQF}. 
\end{example}

A ring $R$ is said to be super $QF$ if and only if every factor ring of $R$ is QF (see, \cite{SS}). 

\begin{proposition}\label{prop:superDK=superQF} Let $R$ be a right artinian ring. Every factor ring of $R$ satisfies $(Q)$ if and only if $R$ is super QF.
\end{proposition}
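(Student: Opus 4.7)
The plan is to deduce this directly from Lemma \ref{lem:ArtinQF} together with the elementary observation that factor rings of a right artinian ring are right artinian. The key point is that once we restrict ourselves to right artinian rings, property $(Q)$ and being QF become equivalent, so the proposition reduces to a tautology after this equivalence is applied factor-ring by factor-ring.

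More concretely, for the forward direction, assume every factor ring $R/I$ of $R$ satisfies $(Q)$. Since $R$ is right artinian and any quotient of a right artinian ring is right artinian, $R/I$ is right artinian. By Lemma \ref{lem:ArtinQF}, a right artinian ring satisfies $(Q)$ if and only if it is QF, so $R/I$ is QF. Hence every factor ring of $R$ is QF, meaning $R$ is super QF.

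For the converse, assume $R$ is super QF. Then for every ideal $I$ of $R$ the quotient $R/I$ is QF, and in particular right self-injective. Thus the right $R/I$-module $R/I$ is injective, and any finite-length right $R/I$-module $M$, being finitely generated, is a homomorphic image of some $(R/I)^{n}$, which is a direct sum of injectives and hence injective over $R/I$. Therefore $R/I$ satisfies $(Q)$ for each ideal $I$.

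The argument has no real obstacle once Lemma \ref{lem:ArtinQF} is invoked; the only thing to verify explicitly is the standard fact that right artinianness passes to factor rings, and that QF $\Rightarrow (Q)$, which as noted above follows because over a QF ring every finitely generated module is a quotient of a finite free module, and finite free modules are injective.
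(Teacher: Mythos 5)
Your proof is correct and follows essentially the same route as the paper: note that factor rings of a right artinian ring are right artinian, then apply Lemma \ref{lem:ArtinQF} to each factor ring in both directions. The extra detail you supply for the converse (finite-length modules over a QF factor ring are quotients of finite free, hence injective, modules) just fills in what the paper's lemma already dispatches as the ``clear'' direction.
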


\begin{proof} Factor rings of artinian rings are artininian. Thus the proof follows by  Lemma \ref{lem:ArtinQF}.
\end{proof}

\begin{proposition} Let $R$ be a commutative noetherian ring. Every factor ring of $R$ satisfies $(Q)$ if and only if $R$ is super QF.
\end{proposition}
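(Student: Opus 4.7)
The plan is to reduce the only-if direction to Proposition \ref{prop:superDK=superQF} by first forcing $R$ to be artinian. The if direction is quick: if $R$ is super QF then every factor ring is QF, and a QF ring satisfies $(Q)$ because $\inj=\proj$ makes every module a homomorphic image of a projective $=$ injective module.

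For the only-if direction, I would assume every factor ring of $R$ satisfies $(Q)$ and aim to show $R$ is artinian; Proposition \ref{prop:superDK=superQF} then delivers the conclusion. Since a commutative noetherian ring is artinian exactly when every prime is maximal, I fix a prime $\mathfrak{p}$, set $D=R/\mathfrak{p}$, and try to show $D$ is a field (every factor ring of $D$ is also a factor ring of $R$, so still satisfies $(Q)$). Suppose $D$ is not a field and pick a nonzero maximal ideal $\mathfrak{m}$ of $D$. The first step is to examine $D/\mathfrak{m}^2$: the only prime of $D$ containing $\mathfrak{m}^2$ is $\mathfrak{m}$, so $D/\mathfrak{m}^2$ is noetherian of Krull dimension zero, hence artinian. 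Since it satisfies $(Q)$, Lemma \ref{lem:ArtinQF} makes it QF. A commutative local QF ring has simple socle; a short check (using Nakayama on $D_\mathfrak{m}$ to rule out annihilators of $\mathfrak{m}/\mathfrak{m}^2$ lying outside $\mathfrak{m}/\mathfrak{m}^2$) identifies $\Soc(D/\mathfrak{m}^2)$ with $\mathfrak{m}/\mathfrak{m}^2$, and simplicity forces $\dim_{D/\mathfrak{m}}(\mathfrak{m}/\mathfrak{m}^2)=1$, i.e.\ $\mu(\mathfrak{m} D_\mathfrak{m})=1$; hence $D_\mathfrak{m}$ is a DVR.

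Since this argument applies at every nonzero maximal ideal of $D$, the domain $D$ is Dedekind, in particular hereditary. Now the paper's lemma characterising hereditary rings satisfying $(Q)$ as right V-rings applies: $D$ must be a V-ring. A commutative V-ring is von Neumann regular, and a von Neumann regular integral domain is a field, contradicting our assumption. Thus every prime of $R$ is maximal, so $R$ is artinian, and Proposition \ref{prop:superDK=superQF} completes the proof. The main obstacle is the socle computation that pins down $\mu(\mathfrak{m} D_\mathfrak{m})$ and hence the Krull dimension of $D_\mathfrak{m}$; once $D$ has been funnelled into the Dedekind class, the hereditary V-ring characterisation already in the paper finishes the job almost effortlessly.
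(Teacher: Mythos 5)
Your proof is correct, but it takes a genuinely different route from the paper's. Both arguments share the same skeleton --- reduce the forward direction to showing that $R$ is artinian and then invoke Proposition~\ref{prop:superDK=superQF} --- but they diverge in how artinianness is forced. The paper argues globally: it considers the sum $s(R)$ of the semiartinian ideals of $R$, observes that if $s(R)\neq R$ then $R/s(R)$ has zero socle, and derives a contradiction because $R/s(R)$ satisfies $(Q)$, hence is dual Kasch, hence is Kasch by \cite[Theorem 3.4]{BL} and so has nonzero socle; thus $s(R)=R$, and a semiartinian noetherian ring is artinian. You instead argue prime by prime: for each prime $\mathfrak{p}$ you pass to the domain $D=R/\mathfrak{p}$, use the artinian quotient $D/\mathfrak{m}^{2}$ (QF by Lemma~\ref{lem:ArtinQF}, hence with simple socle $\mathfrak{m}/\mathfrak{m}^{2}$) to conclude that $D_{\mathfrak{m}}$ is a DVR for every maximal $\mathfrak{m}$, deduce that $D$ is Dedekind and hence hereditary, and then the paper's lemma on hereditary rings forces $D$ to be a $V$-ring, hence von Neumann regular, hence a field; so every prime of $R$ is maximal and $R$ is artinian. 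Your socle computation is sound (a unit of the local ring $D/\mathfrak{m}^{2}$ cannot annihilate $\mathfrak{m}/\mathfrak{m}^{2}$ unless $\mathfrak{m}=\mathfrak{m}^{2}$, which Nakayama together with the injectivity of $D\to D_{\mathfrak{m}}$ rules out), and the remaining steps are standard. The paper's route buys brevity and avoids commutative-algebra machinery at the cost of importing the Kasch/dual Kasch comparison from \cite{BL}; your route is longer but, apart from classical facts (Nakayama, locally-DVR implies Dedekind, commutative $V$-rings are von Neumann regular), runs entirely on results already established in the paper.
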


\begin{proof} Sufficiency is clear.  To prove the necessity, assume that every factor ring of $R$ satisfy $(Q)$.  By Proposition \ref{prop:superDK=superQF} , it is enough to show that $R$ is artinian. Let $s(R)$ be the sum of semiartinian ideals of $R$. Suppose $s(R) \neq R.$ Then $soc(R/s(R))=0,$ and by the assumption, $R/s(R)$ is a ring satisfying   $(Q)$. Thus $R/s(R)$ is a Kasch ring by \cite[Theorem 3.4.]{BL}. This implies that, $soc(R/s(R))\neq 0.$ Therefore we must have $s(R)=R.$ Thus being semiartian and noetherian, implies $R$ is artinian. Hence $R$ is super $QF$ by  Proposition \ref{prop:superDK=superQF}.
\end{proof}

\section*{Acknowledgement}

This work was supported by the Scientific and Technological Research Council of T\"{u}rkiye (TÜBİTAK) (Project number: 122F130).

\end{document}